\documentclass[a4paper,11pt,reqno]{amsart}
\usepackage{amssymb}
\usepackage{enumerate}
\usepackage{amscd,eucal}
\usepackage{graphicx}
\setlength{\textheight}{23cm}
\setlength{\textwidth}{15.5cm}
\setlength{\oddsidemargin}{0cm}
\setlength{\evensidemargin}{0cm}
\topmargin -1cm

\newtheorem{theorem}{Theorem}[section]
\newtheorem{lemma}[theorem]{Lemma}
\newtheorem{proposition}[theorem]{Proposition}

\newtheorem{theorem-definition}[theorem]{Theorem-Definition}

\theoremstyle{definition}
\newtheorem{definition}[theorem]{Definition}

\newtheorem{problem}[theorem]{Problem}
\newtheorem{question}[theorem]{Question}
\newtheorem{example}[theorem]{Example}

\theoremstyle{remark}
\newtheorem{remark}[theorem]{Remark}

\numberwithin{equation}{section}

\allowdisplaybreaks

\def\aa{{\mathfrak{B}}}

\def\dd{{\mathcal{D}}}

\def\hc{{\mathfrak{H}}}

\def\m2{{{\widetilde{\mathfrak{m}}}_2}}
\def\mb{{\boldsymbol{l}}}
\def\nb{{\boldsymbol{m}}}
\def\nn{{\mathcal{N}}}

\def\sss{{\mathcal{S}}}

\DeclareFontEncoding{OT2}{}{}
\DeclareFontSubstitution{OT2}{cmr}{m}{n}
\DeclareSymbolFont{cyss}{OT2}{wncyss}{m}{n}
\DeclareMathSymbol{\sh}{\mathbin}{cyss}{`x}

\allowdisplaybreaks
\begin{document}

\baselineskip 16pt 

\title[Desingularization of multiple zeta-functions]{Desingularization of complex multiple zeta-functions}


\author[H. Furusho, Y. Komori, K. Matsumoto, and H. Tsumura]{Hidekazu Furusho, Yasushi Komori, Kohji Matsumoto, and Hirofumi Tsumura }


\subjclass[2010]{Primary 11M32; Secondary 11M41}
\keywords{Complex multiple zeta-function, desingularization, 
multiple Bernoulli numbers.
}
\thanks{Research of the authors
supported by Grants-in-Aid for Science Research (no. 24684001 for HF,
no. 25400026 for YK, no. 25287002 for KM, no. 15K04788 for HT, respectively), JSPS}
\maketitle

\begin{abstract}
We introduce the method of desingularization of multi-variable multiple
zeta-functions (of the generalized Euler-Zagier type), under 
the motivation of finding suitable rigorous meaning
of the values of multiple zeta-functions at non-positive integer points.
We reveal that multiple zeta-functions (which are known to be 
meromorphic in the whole space
with infinitely many singular hyperplanes)
turn to be entire on the whole space after taking the desingularization.
The desingularized function is given by
a suitable finite `linear' combination of multiple zeta-functions 
with some arguments shifted.
It is shown that specific combinations of Bernoulli numbers attain the
special values at their non-positive integers of the desingularized
ones.
We also discuss twisted multiple zeta-functions, which can be continued to 
entire functions,
and their special values at non-positive integer points can be explicitly calculated.\\
\end{abstract} 

\setcounter{section}{-1}
\section{Introduction} \label{sec-1}


We begin with the {\bf multiple zeta-function of 
the generalized Euler-Zagier type} defined by
\begin{align}
&\zeta_r((s_j);(\gamma_j))= 
\zeta_r(s_1,\ldots,s_r;\gamma_1,\ldots,\gamma_r):=\sum_{\substack{m_1=1}}^\infty\cdots \sum_{\substack{m_r=1}}^\infty
    \prod_{j=1}^r
    \left(m_1\gamma_1+\cdots+m_j\gamma_j\right)^{-s_j}   
\label{gene-EZ}
\end{align}
for complex variables $s_1,\ldots,s_r$, where
$\gamma_1,\ldots,\gamma_r$ are complex parameters whose real parts are all positive. 
Series \eqref{gene-EZ} converges absolutely 
in the region
\begin{equation}                                                                      
\dd_r=\{(s_1,\ldots,s_r)\in \mathbb{C}^r~|~\Re (s_{r-k+1}+\cdots+s_r)>k\ 
(1\leqslant k\leqslant r)\}. \label{region-Z}                                         
\end{equation}
The first work which established the meromorphic continuation of 
\eqref{gene-EZ} is Essouabri's thesis \cite{Ess}. 
The third-named author \cite[Theorem 1]{MaJNT} showed that \eqref{gene-EZ} can be continued meromorphically to the whole complex space with infinitely many 
(possible) 
singular hyperplanes.   

A special case of \eqref{gene-EZ} is the {\bf multiple zeta-function of 
Euler-Zagier type} defined by

\begin{equation}                                                                        
\zeta_r((s_j))=
\zeta_r(s_1,s_2,\ldots,s_r)=\sum_{m_1,\ldots,m_r=1}^\infty \prod_{j=1}^{r}
\left(m_1+\cdots+m_j\right)^{-s_j}, \label{MZF-def}                              
\end{equation}
which is absolutely convergent in $\dd_r$.

Note that $\zeta_r((s_j))=\zeta_r((s_j);(1)).$
Its special value $\zeta_r(n_1,\dots,n_r)$ when $n_1,\dots,n_r$ are positive integers 
makes sense when $n_r>1$.
It is called the multiple zeta value (abbreviated as MZV),
history of whose study goes back to the work of Euler \cite{Eu} published in 1776
\footnote{                                                                              
You can find several literatures which cite the paper saying as if it were published 
in 1775.                                                                                
But according to Euler archive                                                          
{\tt http://eulerarchive.maa.org/},                                                     
it was written in 1771, presented in 1775 and published in 1776.                        
}.
For a couple of these decades, it has been intensively studied
in various fields including number theory, algebraic geometry,
low dimensional topology
and mathematical physics.

On the other hand, in the late 1990s, several authors 
investigated its analytic properties, 
though their results have not been published (for the details, see the survey article \cite{M2010}). 
In the early 2000s, Zhao \cite{Zh2000} and Akiyama, Egami and Tanigawa \cite{AET} independently showed that \eqref{MZF-def} can be meromorphically continued to $\mathbb{C}^r$. Furthermore, 
the \textit{exact} locations of singularities of \eqref{MZF-def} were explicitly determined in
\cite{AET}:
$\zeta_r((s_j))$ for $r\geq 2$ has infinitely many
singular hyperplanes
\begin{align}
& s_r=1,\quad s_{r-1}+s_{r}=2,1,0,-2,-4,-6, \ldots, \notag\\
& s_{r-k+1}+s_{r-k+2}+\cdots+s_r=k-n\quad (3\leqslant k\leqslant r,\ n\in \mathbb{N}_0).\label{EZ-sing}
\end{align}
It is natural to ask how is the behavior of $\zeta_r(-n_1,\dots,-n_r)$ when 
$n_1,\dots,n_r$ are positive (or non-negative) integers.
However, unfortunately, 
almost all non-positive integer points lie on the above singular hyperplanes, so
they are points of indeterminacy. 
For example, according to \cite{AET,AT},
\begin{align*}
\lim_{\varepsilon_1\to 0}\lim_{\varepsilon_2\to 0}\zeta_2(\varepsilon_1,\varepsilon_2)&=\frac{1}{3},\\
\lim_{\varepsilon_2\to 0}\lim_{\varepsilon_1\to 0}\zeta_2(\varepsilon_1,\varepsilon_2)&=\frac{5}{12},\\
\lim_{\varepsilon\to 0}\zeta_2(\varepsilon,\varepsilon)&=\frac{3}{8}.
\end{align*}
There are some other explicit formulas for the values at those
non-positive integer points as limit values when the way of  
approaching to those points are fixed (\cite{AET,AT,Sak09a,Sak09b,Ko2010,Onozuka}).

However points of indeterminacy cannot be easily investigated, so 
we can raise the following fundamental problem.
\begin{problem}\label{prob}
Is there any \lq rigorous' way to give a meaning of $\zeta_r(-n_1,\dots,-n_r)$,
without ambiguity of indeterminacy, 
for $n_1,\ldots,n_r\in{\mathbb Z}_{\geqslant 0}$?
\end{problem}

Several approaches to this problem have been done so far.
Guo and Zhang \cite{GZ},  Manchon and Paycha \cite{MP} 
and also Guo, Paycha and Zhang \cite{GPZ}
discussed a kind of renormalization method.
In the present paper we will develop yet another approach,
called the \textit{desingularization}, in Section \ref{c-1-zeta}.
The Riemann zeta-function $\zeta(s)$ is a meromorphic function on the complex plane
$\mathbb{C}$ with a simple and unique pole at $s=1$.   Hence $(s-1)\zeta(s)$ is an 
entire function. This simple fact may be regarded as a technique to resolve a 
singularity of $\zeta(s)$ and yield an entire function.  
Our desingularization method is motivated by this simple observation.
For $r\geqslant 2$, multiple
zeta-functions have infinitely many singular loci. 
We will show that
a suitable \textit{finite} sum of multiple zeta-functions will cause cancellations of all of those singularities to produce an entire function
whose special values at non-positive integers are described explicitly
in terms of Bernoulli numbers
(see {\sc{Figure}} \ref{fig:0} and
\eqref{ex-04-3} for the case $r=2$).

\begin{figure}[h]
  \centering
  \includegraphics[bb=0 0 232 113]{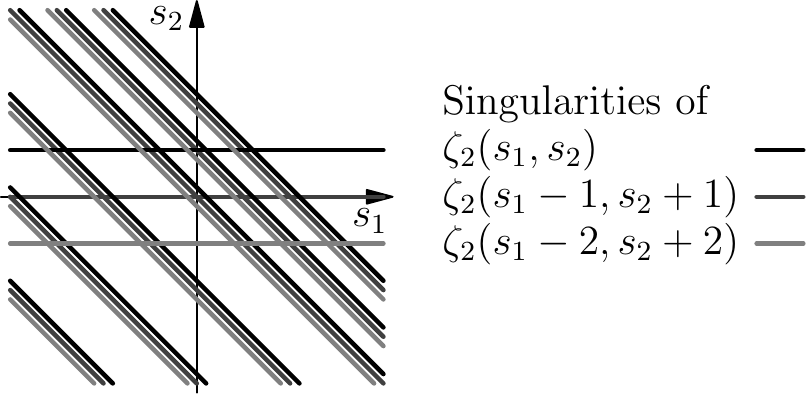}
  \caption{Singularities of $\zeta_2$'s}
  \label{fig:0}
\end{figure}

Another possible approach to the above Problem \ref{prob} is to consider the twisted
multiple series.
Let $\xi_1,\ldots,\xi_r\in \mathbb{C}$ be roots of unity. For 
$\gamma_1,\ldots,\gamma_r\in \mathbb{C}$ with
$\Re \gamma_j >0$ ($1\leqslant j\leqslant r$), 
define the {\bf multiple zeta-function of the generalized Euler-Zagier-Lerch type} by
\begin{equation}                                                                        
\label{Barnes-Lerch}                                                                    
\zeta_r((s_j);(\xi_j);(\gamma_j)):=                                                     
    \sum_{\substack{m_1=1}}^\infty\cdots \sum_{\substack{m_r=1}}^\infty                 
    \prod_{j=1}^r \xi_j^{m_j}
    (m_1\gamma_1+\cdots+m_j\gamma_j )^{-s_j},                               
\end{equation}
which is absolutely convergent in the region $\dd_r$ defined by \eqref{region-Z}. 
We note that the multiple zeta-function of  the generalized Euler-Zagier type 
\eqref{gene-EZ}
is its special case, that is,
$$\zeta_r((s_j);(\gamma_j))=\zeta_r((s_j);(1);(\gamma_j)).$$
Because of the existence of the twisting factor $\xi_1,\ldots,\xi_r$, we can see 
(in Theorem \ref{T-multiple} below) that,
if no $\xi_j$ is equal to $1$, series \eqref{Barnes-Lerch} can be continued to
an entire function, hence its values at non-positive integer points have a rigorous
meaning.    Moreover we will show that those values can be written explicitly in
terms of twisted multiple Bernoulli numbers.

In Section \ref{sec-2-2} we will introduce multiple
twisted Bernoulli numbers, 
which are connected with
multiple zeta-functions of the generalized Euler-Zagier-Lerch type \eqref{Barnes-Lerch}.
After discussing the aforementioned properties of \eqref{Barnes-Lerch} in Section \ref{MZF}, 
we will develop our method of desingularization  
in Section \ref{c-1-zeta}.
Multiple zeta-functions
\eqref{gene-EZ}
are meromorphically continued to the whole space
with their singularities lying on infinitely many hyperplanes.
Our desingularization is a method to reduce them into entire functions (Theorem \ref{T-c-1-zeta}).
We will further show that the desingularized
functions are given by
a suitable finite `linear' combination of multiple zeta-functions
\eqref{gene-EZ} with some arguments shifted (Theorem \ref{Th-ex})
This is the most important result in the present paper, in which we see
a miraculous cancellation of all of their {\it infinitely} many 
singular hyperplanes 
occurring there
by taking a suitable {\it finite} combination of these functions.
We will also prove that certain combinations of Bernoulli numbers attain the
special values at their non-positive integers of the desingularized
functions (Theorem \ref{C-Zr}).
Several explicit examples of desingularization will be given in Section \ref{sec-examples}.

It is to be noted that
these observations on our desingularization method
lead to the construction of $p$-adic multiple $L$-functions 
which will be discussed in a separate paper \cite{FKMT}.

\if0
\noindent
{\bf Acknowledgements.}\
The authors express their sincere gratitude to the referee for useful advice.
They wish to express their thanks to the Isaac Newton Institute for Mathematical Sciences, Cambridge, and the Max Planck Institute for Mathematics, Bonn, where parts of 
this work have been carried out in 2013.
\fi
\section{Twisted multiple Bernoulli numbers}\label{sec-2-2}

In this section, we first review the definition of classical Bernoulli numbers and Koblitz' twisted
Bernoulli numbers.
Then we  will introduce twisted multiple Bernoulli numbers,
their multiple analogue, 
and investigate their expression as combinations of twisted Bernoulli numbers.

Let $\mathbb{N}$, $\mathbb{N}_0$, $\mathbb{Z}$, $\mathbb{Q}$, $\mathbb{R}$ and $\mathbb{C}$ be the set of natural numbers, non-negative integers, rational integers, rational numbers, real numbers and complex numbers, respectively. 
For $s\in \mathbb{C}$, denote by $\Re s$ and $\Im s$ the real and the imaginary parts of $s$, respectively.


It is well-known that 
$\zeta(s)$ is a meromorphic function on $\mathbb{C}$ with a simple pole at $s=1$, and satisfies
\begin{equation}
\zeta(1-k)=
\begin{cases} 
-\frac{B_k}{k} & (k\in \mathbb{N}_{>1})\\
-\frac{1}{2}  & (k=1),
\end{cases}
\label{1-1-2}
\end{equation}
where $\{B_n\}$ 
are the Bernoulli numbers
\footnote{                                                                               
or better to be called Seki-Bernoulli numbers, because
Takakazu (Kowa) Seki published the work on these numbers, independently,
before Jakob Bernoulli.
}
defined by 
\begin{align*}
\frac{t}{e^t-1}=\sum_{n=0}^\infty B_n\frac{t^n}{n!},
\end{align*}
(see \cite[Theorem 4.2]{Wa}). 

\begin{definition}[{\cite[p.\,456]{Kob79}}]
For any root of unity $\xi$, we define the {\bf twisted Bernoulli numbers} $\{ \aa_n(\xi)\}$ by
\begin{equation}
\hc(t;\xi)=\frac{1}{1-\xi e^{t}}=\sum_{n=-1}^\infty \aa_n(\xi)\frac{t^n}{n!}, \label{def-tw-Ber}
\end{equation}
where we formally let $(-1)!=1$.
\end{definition}

\begin{remark}
Koblitz \cite{Kob79} generally defined the twisted Bernoulli numbers associated with primitive 
Dirichlet characters. 
The above $\{\aa_n(\xi)\}$ correspond to the trivial character.
\end{remark}

In the case $\xi=1$, we have 
\begin{equation}
\aa_{-1}(1)=-1,\qquad \aa_n(1)=-\frac{B_{n+1}}{n+1}\quad (n\in \mathbb{N}_0). \label{Ber-01}
\end{equation}
In the case $\xi\not=1$, 
we have $\aa_{-1}(\xi)=0$ and 
$\aa_n(\xi)=\frac{1}{1-\xi}H_n\left(\xi^{-1}\right)$ $(n\in \mathbb{N}_0)$, where $\{H_n(\lambda)\}_{n\geqslant 0}$ are what is called the Frobenius-Euler numbers associated with $\lambda$ defined by
$$\frac{1-\lambda}{e^t-\lambda}=\sum_{n=0}^\infty H_n(\lambda)\frac{t^n}{n!}$$
(see Frobenius \cite{Fro}). 
We obtain from \eqref{def-tw-Ber} that $\aa_n(\xi)\in \mathbb{Q}(\xi)$. For example, 
\begin{equation}
\begin{split}
& \aa_0(\xi)=\frac{1}{1-\xi},\quad \aa_1(\xi)=\frac{\xi}{(1-\xi)^2},\quad \aa_2(\xi)=\frac{\xi(\xi+1)}{(1-\xi)^3},\\
& \aa_3(\xi)=\frac{\xi(\xi^2+4\xi+1)}{(1-\xi)^4},\quad \aa_4(\xi)=\frac{\xi(\xi^3+11\xi^2+11\xi+1)}{(1-\xi)^5},\ldots
\end{split}
 \label{TBN-exam}
\end{equation}

Let $\mu_k$ be the group of $k$th roots of unity. 
Using the relation
\begin{equation}
\frac{1}{X-1}-\frac{k}{X^{k}-1}=\sum_{\xi\in \mu_k\atop \xi\not=1}\frac{1}{1-\xi X}\qquad (k\in \mathbb{N}_{>1}) \label{log-der}
\end{equation}
for an indeterminate $X$, 
we obtain the following.

\begin{proposition}
Let $c\in \mathbb{N}_{>1}$. For $n\in \mathbb{N}_0$, 
\begin{equation}
\left(1-c^{n+1}\right)\frac{B_{n+1}}{n+1}=\sum_{\xi^c=1\atop \xi\not=1}\aa_n(\xi). \label{2-0-1}
\end{equation}
\end{proposition}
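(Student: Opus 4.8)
The plan is to substitute the exponential $X=e^{t}$ into the rational-function identity \eqref{log-der} with $k=c$ and then to compare coefficients of the resulting Laurent expansions in $t$. First I would observe that, in a punctured neighbourhood of $t=0$, one has $\xi e^{t}\neq 1$ for every $c$-th root of unity $\xi\neq 1$, so each summand $1/(1-\xi e^{t})=\hc(t;\xi)$ is holomorphic at $t=0$, whereas the two terms $1/(e^{t}-1)$ and $c/(e^{ct}-1)$ on the left-hand side each have a simple pole there. Identity \eqref{log-der} thus yields the equality of meromorphic functions
\begin{equation*}
\frac{1}{e^{t}-1}-\frac{c}{e^{ct}-1}=\sum_{\xi^{c}=1\atop \xi\neq 1}\hc(t;\xi)
\end{equation*}
near $t=0$.

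Next I would expand the left-hand side by means of the defining series $t/(e^{t}-1)=\sum_{n\geqslant 0}B_{n}t^{n}/n!$: writing $1/(e^{t}-1)=t^{-1}\sum_{n\geqslant 0}B_{n}t^{n}/n!$ and $c/(e^{ct}-1)=t^{-1}\sum_{n\geqslant 0}B_{n}c^{n}t^{n}/n!$, the difference equals $\sum_{n\geqslant 0}(1-c^{n})B_{n}t^{n-1}/n!$. The $n=0$ term vanishes since $1-c^{0}=0$, so the principal parts cancel and the left-hand side is holomorphic at $t=0$, with
\begin{equation*}
\frac{1}{e^{t}-1}-\frac{c}{e^{ct}-1}=\sum_{n\geqslant 0}\bigl(1-c^{n+1}\bigr)\frac{B_{n+1}}{n+1}\cdot\frac{t^{n}}{n!}.
\end{equation*}
On the right-hand side I would insert the expansion \eqref{def-tw-Ber}, $\hc(t;\xi)=\sum_{n\geqslant -1}\aa_{n}(\xi)t^{n}/n!$, and use that $\aa_{-1}(\xi)=0$ for $\xi\neq 1$, so that the right-hand side equals $\sum_{n\geqslant 0}\bigl(\sum_{\xi^{c}=1,\,\xi\neq 1}\aa_{n}(\xi)\bigr)t^{n}/n!$. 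Comparing the coefficient of $t^{n}/n!$ on the two sides yields \eqref{2-0-1} for every $n\in\mathbb{N}_{0}$.

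The argument is essentially routine bookkeeping with power series; the only points deserving a word of justification are the legitimacy of substituting $X=e^{t}$ into \eqref{log-der} — immediate, since both sides are rational functions of $X$ and composition with the holomorphic map $X=e^{t}$ is harmless away from the zeros of the denominators — and the termwise rearrangement of the sum over $\xi$, which is trivially valid because that sum is finite. I therefore do not expect any genuine obstacle here. (An entirely equivalent route is to rewrite the left-hand side of \eqref{log-der} directly as $-\hc(t;1)+c\,\hc(ct;1)$ and then invoke \eqref{Ber-01}, which makes the cancellation of the $t^{-1}$-terms even more transparent.)
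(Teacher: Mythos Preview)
Your proof is correct and follows precisely the route the paper indicates: the paper simply says ``Using the relation \eqref{log-der}\ldots\ we obtain the following'', and you have spelled out the intended substitution $X=e^{t}$ and the coefficient comparison in full and correct detail. There is nothing to add.
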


\begin{remark}
Let $\xi$ be a root of unity. 
As an analogue of \eqref{1-1-2}, it holds that
\begin{equation}
\phi(-k;\xi)=\aa_k(\xi)\quad (k\in \mathbb{N}_0),\label{phi-val}
\end{equation}
where $\phi(s;\xi)$ is the {\bf zeta-function of Lerch type}
defined by the meromorphic continuation of the series
\begin{equation}
\phi(s;\xi)=\sum_{m\geqslant 1}\xi^{m}m^{-s} \qquad (\Re s>1)
\label{Lerch-zeta}
\end{equation}
(cf.\ {\cite[Chapter 2,\,Section 1]{Kob}}).
\end{remark}

We see that \eqref{2-0-1} can also be given from the relation 
\begin{equation}
\left(c^{1-s}-1\right)\zeta(s)=\sum_{\xi^c=1\atop \xi\not=1} \phi(s;\xi). \label{rel-phi}
\end{equation}

Now we define certain multiple analogues of twisted Bernoulli numbers. 

\begin{definition}\label{Def-M-Bern}
Let $r\in \mathbb{N}$, 
$\gamma_1,\ldots,\gamma_r\in \mathbb{C}$ 
and let $\xi_1,\ldots,\xi_r\in \mathbb{C}\setminus\{1\}$ be roots of unity. 
Set 
\begin{align}
    \mathfrak{H}_r  (( t_j );( \xi_j); ( \gamma_j))&:=
    \prod_{j=1}^{r} \mathfrak{H}(\gamma_j (\sum_{k=j}^r t_k);\xi_j)=\prod_{j=1}^{r} \frac{1}{1-\xi_j \exp\left(\gamma_j \sum_{k=j}^r t_k\right)}\label{Def-Hr}
\end{align}
and define
{\bf twisted multiple Bernoulli numbers}
\begin{footnote}{We are not sure which is better, ``twisted multiple'', or ``multiple twisted''. 
But we will skip this problem because
it looks that these two adjectives are ``commutative'' here.}
\end{footnote}$\{\aa(n_1,\ldots,n_r;( \xi_j);( \gamma_j))\}$
by 
\begin{align}
    \mathfrak{H}_r  (( t_j );( \xi_j); ( \gamma_j))
    =\sum_{n_1=0}^\infty
    \cdots
    \sum_{n_r=0}^\infty
    \aa(n_1,\ldots,n_r;( \xi_j);( \gamma_j))
    \frac{t_1^{n_1}}{n_1!}
    \cdots
    \frac{t_r^{n_r}}{n_r!}.
\label{Fro-def-r}
\end{align}
\end{definition}
\begin{remark}
  It is possible to generalize the above definition to the
case when $\xi_r=1$.  In this case, the sum with respect to $n_r$ on
the right-hand side of \eqref{Fro-def-r} is from $-1$ to $\infty$, hence gives a
more natural extension of \eqref{def-tw-Ber}.
\end{remark}

In the case $r=1$, we have $\aa_n(\xi_1)=\aa(n;\xi_1;1)$. Note that 
since $\xi_j\not=1$ $(1\leqslant j\leqslant r)$, we see that 
$\mathfrak{H}_r  (( t_j );( \xi_j); ( \gamma_j))$ is holomorphic around the origin with respect to the parameters $t_1,\dots, t_r$,
hence the singular part does not appear on the right-hand side of \eqref{Fro-def-r}.

We immediately obtain the following from \eqref{def-tw-Ber}, \eqref{Def-Hr} and \eqref{Fro-def-r}.

\begin{proposition}\label{prop-M-Bern}
Let $\gamma_1,\ldots,\gamma_r\in \mathbb{C}$ 
and $\xi_1,\ldots,\xi_r\in \mathbb{C}\setminus\{1\}$ be roots of unity. Then 
$\aa(n_1,\ldots,n_r;( \xi_j);( \gamma_j))$ can be expressed as a polynomial in $\{ \aa_{n}(\xi_j)\,|\,1\leqslant j\leqslant r,~{n\geqslant 0}\}$ and $\{\gamma_1,\ldots,\gamma_r\}$ with $\mathbb{Q}$-coefficients, that is, a rational function in $\{\xi_j\}$ and $\{\gamma_j\}$ with $\mathbb{Q}$-coefficients. 
\end{proposition}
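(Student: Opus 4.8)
The plan is to unravel the generating-function identity \eqref{Fro-def-r} directly, reducing the multiple statement to the single-variable case. First I would expand the product defining $\mathfrak{H}_r((t_j);(\xi_j);(\gamma_j))$ in \eqref{Def-Hr}. Each factor $\mathfrak{H}(\gamma_j(\sum_{k=j}^r t_k);\xi_j)$ is, by \eqref{def-tw-Ber} together with the holomorphy remark (since $\xi_j\neq 1$, the Taylor expansion starts at degree $0$), equal to $\sum_{m_j=0}^\infty \aa_{m_j}(\xi_j)\,\gamma_j^{m_j}(\sum_{k=j}^r t_k)^{m_j}/m_j!$. So the product is
\begin{align*}
\mathfrak{H}_r((t_j);(\xi_j);(\gamma_j))
=\prod_{j=1}^r \sum_{m_j=0}^\infty \frac{\aa_{m_j}(\xi_j)\,\gamma_j^{m_j}}{m_j!}\Bigl(\sum_{k=j}^r t_k\Bigr)^{m_j}.
\end{align*}

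Next I would expand each power $(\sum_{k=j}^r t_k)^{m_j}$ by the multinomial theorem and collect, for fixed exponents $(n_1,\dots,n_r)$, the coefficient of $t_1^{n_1}\cdots t_r^{n_r}$. Comparing with \eqref{Fro-def-r} then yields an explicit formula: $\aa(n_1,\dots,n_r;(\xi_j);(\gamma_j))$ equals $n_1!\cdots n_r!$ times a finite sum over nonnegative integers $m_1,\dots,m_r$ (the constraint being that the total $t_k$-degree contributed matches $n_k$ for each $k$, which forces $m_r\le n_r$, then $m_{r-1}\le n_{r-1}+n_r-m_r$, etc., so the sum is genuinely finite) of a product of multinomial coefficients times $\prod_j \aa_{m_j}(\xi_j)\gamma_j^{m_j}/m_j!$. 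Each term is manifestly a $\mathbb{Q}$-coefficient monomial in the $\aa_{m_j}(\xi_j)$ and the $\gamma_j$, so the whole expression is a polynomial in $\{\aa_n(\xi_j)\mid 1\le j\le r,\ n\ge 0\}$ and $\{\gamma_1,\dots,\gamma_r\}$ with $\mathbb{Q}$-coefficients.

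For the second assertion (rational function in $\{\xi_j\}$ and $\{\gamma_j\}$ over $\mathbb{Q}$), I would simply invoke that for $\xi\neq 1$ we have $\aa_n(\xi)=\frac{1}{1-\xi}H_n(\xi^{-1})\in\mathbb{Q}(\xi)$ as recorded right after \eqref{Ber-01}; substituting these rational functions into the polynomial just obtained gives a rational function in the $\xi_j$ and $\gamma_j$ with $\mathbb{Q}$-coefficients. Here I would be a little careful to note that $\aa_n(\xi)$ lies in $\mathbb{Q}(\xi)$ with $\xi$ treated as an indeterminate (the denominators being powers of $1-\xi$), so no issue arises from $\xi$ being a root of unity.

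The computation is essentially routine; the only mildly delicate point is bookkeeping the range of summation to confirm finiteness — i.e.\ writing the degree constraints cleanly so that it is visible that only finitely many $(m_1,\dots,m_r)$ contribute to a given $(n_1,\dots,n_r)$. That is where I would spend the most care, but it follows immediately from the triangular shape of the linear forms $\sum_{k=j}^r t_k$: the variable $t_r$ appears in every factor, $t_{r-1}$ in all but the last, and so on, so reading off the coefficient from the top index $n_r$ downward bounds each $m_j$ in turn. I do not expect any genuine obstacle.
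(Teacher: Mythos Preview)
Your proposal is correct and is exactly the argument the paper has in mind: the paper states that the proposition follows ``immediately'' from \eqref{def-tw-Ber}, \eqref{Def-Hr} and \eqref{Fro-def-r}, and then illustrates precisely your computation in the case $r=2$ (Example~\ref{Exam-DH}). Your write-up simply makes explicit the multinomial expansion and the finiteness bookkeeping that the paper leaves to the reader.
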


\begin{example}\label{Exam-DH}
We consider the case $r=2$. 
Substituting \eqref{def-tw-Ber} into \eqref{Def-Hr} in the case $r=2$, we have
\begin{align*}
& \mathfrak{H}_2(t_1,t_2;\xi_1,\xi_2;\gamma_1,\gamma_2)=\frac{1}{1-\xi_1 \exp\left(\gamma_1(t_1+t_2)\right)}\frac{1}{1-\xi_2 \exp\left(\gamma_2 t_2\right)}\\
    &=\left(\sum_{m=0}^\infty \aa_m(\xi_1)\frac{\gamma_1^m (t_1+t_2)^m}{m!}\right) \left(\sum_{n=0}^\infty \aa_n(\xi_2)\frac{\gamma_2^n t_2^n}{n!}\right)
    \\
    &=\sum_{m=0}^\infty\sum_{n=0}^\infty \aa_m(\xi_1)\aa_n(\xi_2)\left(\sum_{k,j\geqslant 0 \atop k+j=m}\frac{t_1^k t_2^j}{k!j!}\right)\gamma_1^m\gamma_2^n\frac{t_2^n}{n!}.
\end{align*}
Putting $l=n+j$, we have 
\begin{align*}
\mathfrak{H}_2(t_1,t_2;\xi_1,\xi_2;\gamma_1,\gamma_2)
    &=\sum_{k=0}^\infty\sum_{l=0}^\infty \sum_{j=0}^{l}\binom{l}{j}\aa_{k+j}(\xi_1)\aa_{l-j}(\xi_2)\gamma_1^{k+j}\gamma_2^{l-j}\frac{t_1^k}{k!}\frac{t_2^l}{l!},
\end{align*}
which gives 
\begin{equation}
\begin{split}
\aa(k,l;\xi_1,\xi_2;\gamma_1,\gamma_2)&=\sum_{j=0}^{l}\binom{l}{j}\aa_{k+j}(\xi_1)\aa_{l-j}(\xi_2)\gamma_1^{k+j}\gamma_2^{l-j}\quad (k,l\in \mathbb{N}_0).
\end{split}
\label{Eur-exp1}
\end{equation}
For example, we can obtain from \eqref{TBN-exam} that 
\begin{align*}
&\aa(0,0;\xi_1,\xi_2;\gamma_1,\gamma_2)=\frac{1}{(1-\xi_1)(1-\xi_2)},\quad \aa(1,0;\xi_1,\xi_2;\gamma_1,\gamma_2)=\frac{\xi_1\gamma_1}{(1-\xi_1)^2(1-\xi_2)},\\
&\aa(0,1;\xi_1,\xi_2;\gamma_1,\gamma_2)=\frac{\xi_1\gamma_1+\xi_2\gamma_2-\xi_1\xi_2(\gamma_1+\gamma_2)}{(1-\xi_1)^2(1-\xi_2)^2},\\
&\aa(1,1;\xi_1,\xi_2;\gamma_1,\gamma_2)=\frac{\xi_1^2\gamma_1(\gamma_1-\xi_2(\gamma_1+\gamma_2))+\xi_1\gamma_1(\gamma_1-\xi_2(\gamma_1-\gamma_2))}{(1-\xi_1)^3(1-\xi_2)^2},\ldots
\end{align*}
\end{example}

The following series will be treated in our desingularization method in Section \ref{c-1-zeta}.

\begin{definition}\label{define-tilde-H}
For $c\in \mathbb{R}$ and 
$\gamma_1,\ldots,\gamma_r\in \mathbb{C}$ with $\Re \gamma_j >0 \ (1\leqslant j\leqslant r)$, define
\begin{align}
\widetilde{\mathfrak{H}}_r  (( t_j ); ( \gamma_j);c)
& =\prod_{j=1}^{r} \left( \frac{1}{\exp\left(\gamma_j \sum_{k=j}^r t_k\right)-1}-\frac{c}{\exp\left(c\gamma_j \sum_{k=j}^r t_k\right)-1}\right)\notag\\
& =\prod_{j=1}^{r} \left(\sum_{m=1}^\infty  \left(1-c^m\right)B_m\frac{\left(\gamma_j \sum_{k=j}^r t_k\right)^{m-1}}{m!}\right).\label{def-tilde-H}
\end{align}
In particular when $c\in \mathbb{N}_{>1}$, by use of \eqref{log-der}, we have
\begin{align}
\widetilde{\mathfrak{H}}_r  (( t_j ); ( \gamma_j);c)&=\prod_{j=1}^{r} \sum_{\xi_j^c=1 \atop \xi_j\not=1}\frac{1}{1-\xi_j \exp\left(\gamma_j \sum_{k=j}^r t_k\right)}\notag\\
& =\sum_{\xi_1^c=1 \atop \xi_1\not=1}\cdots \sum_{\xi_r^c=1 \atop \xi_r\not=1}\mathfrak{H}_r  (( t_j );( \xi_j); ( \gamma_j)).\label{tilde-H}
\end{align}
\end{definition}

\begin{remark}
We note that $\widetilde{\mathfrak{H}}_r  (( t_j ); ( \gamma_j);c)$ 
is holomorphic around the origin with respect to the parameters $(t_j)$,
and tends to $0$ as $c \to 1$. 
We also note that 
the Bernoulli numbers appear in the Maclaurin expansion of the limit
\begin{equation*}
\lim_{c\to 1}\frac{1}{(c-1)^r}\widetilde{\mathfrak{H}}_r  (( t_j ); ( \gamma_j);c).\label{limit-H}
\end{equation*}
These are important points in our arguments  on desingularization methods
developed in  Section \ref{c-1-zeta}.
\end{remark}

\begin{example}\label{Exam-DH-2}
Similarly to Example \ref{Exam-DH}, we obtain from
\eqref{def-tilde-H} 
with any $c\in \mathbb{R}$ that
\begin{align}
& \widetilde{\mathfrak{H}}_2  (t_1,t_2; \gamma_1,\gamma_2;c) \notag\\
& =\sum_{k,l=0}^\infty \left\{\sum_{j=0}^{l}\binom{l}{j}\left(1-c^{k+j+1}\right)\left(1-c^{l-j+1}\right)\frac{B_{k+j+1}}{k+j+1}\frac{B_{l-j+1}}{l-j+1}\gamma_1^{k+j}\gamma_2^{l-j}\right\} \frac{t_1^kt_2^l}{k! l!}. \label{Convo-Bern}
\end{align}
Therefore it follows from \eqref{Fro-def-r} and \eqref{tilde-H} that
\begin{equation}
\begin{split}
&\sum_{\xi_1\in \mu_c\atop \xi_1\not=1}\sum_{\xi_2\in \mu_c\atop \xi_2\not=1}\aa(k,l;\xi_1,\xi_2;\gamma_1,\gamma_2)\\
&\quad =\sum_{j=0}^{l}\binom{l}{j}\left(1-c^{k+j+1}\right)\left(1-c^{l-j+1}\right)\frac{B_{k+j+1}}{k+j+1}\frac{B_{l-j+1}}{l-j+1}\gamma_1^{k+j}\gamma_2^{l-j}\quad (k,l\in \mathbb{N}_0)
\end{split}
\label{Eur-exp2}
\end{equation}
for $c\in \mathbb{N}_{>1}$. 
\end{example}


\begin{remark}\label{poly-Bernoulli}
Kaneko \cite{Kaneko1997} defined the poly-Bernoulli numbers $\{B_n^{(k)}\}_{n\in \mathbb{N}_0}$ $(k\in \mathbb{Z})$ by use of the polylogarithm of order $k$. 
Explicit relations between twisted multiple Bernoulli numbers and poly-Bernoulli numbers are not clearly known. 
It is noted that, for example, 
$$  B_l^{(2)} = \sum_{j=0}^l \binom{l}{j} \frac{B_{l-j}B_j}{j+1} \quad (l\in \mathbb{N}_0),$$
which resembles \eqref{Eur-exp1} and \eqref{Eur-exp2}.
\end{remark}

\section{Multiple zeta-functions}\label{MZF}

Corresponding to the twisted multiple Bernoulli numbers
$\{\aa((n_j);( \xi_j);( \gamma_j))\}$ is the
multiple zeta-function of the generalized Euler-Zagier-Lerch type
\eqref{Barnes-Lerch} defined in Introduction, 
which is a multiple analogue of $\phi(s;\xi)$.
This function can be continued analytically to the
whole space and interpolates $\aa((n_j);( \xi_j);( \gamma_j))$ at non-positive integers (Theorem \ref{T-multiple}). 

Assume $\xi_j \neq 1$ $(1\leqslant j \leqslant r)$. 
Using the well-known relation 
\begin{equation*}
  u^{-s}=\frac{1}{\Gamma(s)}\int_0^\infty e^{-ut}{t^{s-1}} dt,
\end{equation*}
we obtain
  \begin{align}
    & \zeta_r((s_j);(\xi_j);(\gamma_j))\notag\\
    &=
    \sum_{\substack{m_1=1}}^\infty\cdots \sum_{\substack{m_r=1}}^\infty
    \Bigl(    
    \prod_{j=1}^r \xi_j^{ m_j}\Bigr) 
    \left(\prod_{k=1}^r
    \frac{1}{\Gamma(s_k)}\right)\int_{[0,\infty)^{r}}
    \prod_{k=1}^r
    \exp(-t_k(\sum_{\substack{j\leqslant k}} m_j\gamma_j)) 
    \prod_{k=1}^r t_k^{s_k-1}dt_k\notag
    \\
    &=
    \left(\prod_{k=1}^r \frac{1}{\Gamma(s_k)}\right)\int_{[0,\infty)^{r}}\prod_{j=1}^r
    \frac{\xi_j \exp(-\gamma_j (\sum_{k=j}^r t_k))}{1-\xi_j \exp(-\gamma_j (\sum_{k=j}^r t_k))}
    \prod_{k=1}^r t_k^{s_k-1}dt_k\notag
    \\
    &=
    \left(\prod_{k=1}^r \frac{1}{(e^{2\pi i s_k}-1)\Gamma(s_k)}\right)\int_{\mathcal{C}^{r}}
    \prod_{j=1}^r 
    \frac{\xi_j \exp(-\gamma_j (\sum_{k=j}^r t_k))}{1-\xi_j \exp(-\gamma_j (\sum_{k=j}^r t_k))}
    \prod_{k=1}^r t_k^{s_k-1}dt_k\notag\\
    &=(-1)^r 
    \left(\prod_{k=1}^r \frac{1}{(e^{2\pi i s_k}-1)\Gamma(s_k)}\right)\int_{\mathcal{C}^{r}}
    \mathfrak{H}_r  (( t_j ),( \xi_j^{-1}), ( \gamma_j))\prod_{k=1}^r t_k^{s_k-1}dt_k,\label{Cont}
  \end{align}
where $\mathcal{C}$ is the Hankel contour, that is, the path consisting of the positive real axis (top side), a circle around the origin of radius $\varepsilon$ (sufficiently small), and the positive real axis (bottom side). 
Note that the third equality holds because we can let $\varepsilon \to 0$ 
on the fourth member of \eqref{Cont}. In fact, 
the integrand of the fourth member is holomorphic around the origin with respect to the parameters $(t_j)$ because of $\xi_j \neq 1$ $(1\leqslant j \leqslant r)$. Here we can easily show that the integral on the last member of \eqref{Cont} is absolutely convergent in a usual manner with respect to the Hankel contour. Hence 
we obtain the following.

\begin{theorem}\label{T-multiple}
Let $\xi_1,\ldots,\xi_r\in \mathbb{C}$ 
be roots of unity and 
$\gamma_1,\ldots,\gamma_r\in \mathbb{C}$ with $\Re \gamma_j >0 \ (1\leqslant j\leqslant r)$. 
Assume that 
\begin{equation}\label{non-unity assumption}
\xi_j\neq 1  \quad\text{ for all } j \ (1\leqslant j\leqslant r).
\end{equation}
Then, with the above notation, 
$\zeta_r((s_j);(\xi_j);(\gamma_j))$ can be analytically continued to $\mathbb{C}^r$ as an entire function in $(s_j)$. For $n_1,\ldots,n_r\in \mathbb{N}_0$, 
\begin{equation}
\zeta_r((-n_j);(\xi_j);(\gamma_j))=(-1)^{r+n_1+\cdots+n_r}\aa((n_j);( \xi_j^{-1});( \gamma_j)). \label{multi-val}
\end{equation}
\end{theorem}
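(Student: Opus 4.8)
The plan is to exploit the integral representation \eqref{Cont} already derived in the text, together with the crucial observation that, under the assumption \eqref{non-unity assumption}, the integrand $\mathfrak{H}_r((t_j);(\xi_j^{-1});(\gamma_j))$ is holomorphic around the origin in all variables $(t_j)$. First I would argue entireness: in \eqref{Cont} the only apparent source of singularities is the prefactor $\prod_{k=1}^r \frac{1}{(e^{2\pi i s_k}-1)\Gamma(s_k)}$, but the Hankel-contour integral $\int_{\mathcal{C}^r}\mathfrak{H}_r((t_j);(\xi_j^{-1});(\gamma_j))\prod_k t_k^{s_k-1}dt_k$ is itself entire in $(s_j)$ — it converges absolutely and locally uniformly for all $(s_j)\in\mathbb{C}^r$ because on each copy of $\mathcal{C}$ the factor $t_k^{s_k-1}$ is bounded and $\mathfrak{H}_r$ decays exponentially along the rays (here one uses $\Re\gamma_j>0$), while near the small circle of radius $\varepsilon$ holomorphy of $\mathfrak{H}_r$ at the origin keeps everything bounded. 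The poles of $1/\Gamma(s_k)$ are in fact zeros, so they help rather than hurt; the genuine concern is the zeros of $e^{2\pi i s_k}-1$ at $s_k\in\mathbb{Z}$. At such points the contour integral must vanish to the appropriate order, which is exactly what the standard Hankel-loop computation shows when the integrand is holomorphic at $0$: collapsing $\mathcal{C}$ onto the positive real axis, the discrepancy between the two sides of the cut is $(e^{2\pi i s_k}-1)$ times the real integral, so the quotient extends holomorphically. I would phrase this as the classical fact that $\frac{1}{(e^{2\pi i s}-1)\Gamma(s)}\int_{\mathcal{C}}f(t)t^{s-1}dt$ is entire whenever $f$ is holomorphic near $0$ and suitably decaying, applied in each variable $s_k$ successively (or all at once via a multidimensional version of the same estimate).

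For the special-value formula \eqref{multi-val}, the strategy is to evaluate the right-hand side of \eqref{Cont} at $(s_j)=(-n_j)$ with $n_j\in\mathbb{N}_0$. At $s_k=-n_k$ one has a zero of order one from $1/(e^{2\pi i s_k}-1)$ and a zero of order ... wait — rather, $1/\Gamma(-n_k)=0$, so the prefactor $\frac{1}{(e^{2\pi i s_k}-1)\Gamma(s_k)}$ has a removable singularity whose value I would compute from the residue of $\Gamma$: since $\Gamma(s)$ has a simple pole at $s=-n$ with residue $(-1)^n/n!$ and $e^{2\pi i s}-1$ has a simple zero there with derivative $2\pi i$, the quotient $\frac{1}{(e^{2\pi i s}-1)\Gamma(s)}\to \frac{(-1)^n n!}{2\pi i}$ as $s\to -n$. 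On the other side, the Hankel integral $\int_{\mathcal{C}}f(t)t^{s-1}dt$ at $s=-n$ equals $2\pi i$ times the coefficient of $t^n$ in the Taylor expansion of $f$ at the origin (the familiar $\int_{\mathcal{C}}t^{s-1}dt$ loop evaluation). Combining the one-variable pieces over $k=1,\dots,r$, the factors of $2\pi i$ cancel and one is left with $(-1)^{n_1+\cdots+n_r}(n_1!\cdots n_r!)$ times the Taylor coefficient $[t_1^{n_1}\cdots t_r^{n_r}]\,\mathfrak{H}_r((t_j);(\xi_j^{-1});(\gamma_j))$, and by Definition \ref{Def-M-Bern}, specifically \eqref{Fro-def-r}, that coefficient is $\frac{1}{n_1!\cdots n_r!}\aa((n_j);(\xi_j^{-1});(\gamma_j))$. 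Together with the overall $(-1)^r$ in \eqref{Cont} this yields \eqref{multi-val}.

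The main obstacle I anticipate is making the passage to the special values rigorous as an \emph{iterated} limit in several complex variables: one must justify that the $r$-fold Hankel integral factors sufficiently — or can be handled by Fubini and a one-variable argument in each slot while the remaining variables are held on their contours — so that the residue computation can be performed coordinate by coordinate. This requires the absolute convergence to be locally uniform in $(s_j)$ near the point $(-n_j)$, which in turn rests on the exponential decay of $\mathfrak{H}_r$ along the rays of $\mathcal{C}^r$ (valid since $\Re\gamma_j>0$) and on holomorphy at the origin (valid since $\xi_j\neq 1$); both hypotheses of the theorem are used precisely here. A secondary technical point is to confirm that the Hankel-loop evaluation of $\int_{\mathcal{C}}t^{s-1}dt$-type integrals picks out exactly the Taylor coefficient with no extra contribution from the small circle as $\varepsilon\to 0$, which again follows from holomorphy of the integrand at $0$ — the same fact already invoked for the third equality in \eqref{Cont}. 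Once these convergence and interchange issues are dispatched, the identity \eqref{multi-val} is a bookkeeping exercise in the constants $(-1)^n n!/2\pi i$ and $2\pi i$.
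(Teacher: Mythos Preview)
Your proposal is correct and follows essentially the same route as the paper: the paper also starts from the contour representation \eqref{Cont}, observes that the only possible singularities come from the prefactor at $s_k=l_k\in\mathbb{N}$, shows the contour integral vanishes there (the paper does this by shrinking to the small circle $C_\varepsilon$ and invoking the residue theorem, which is the same classical Hankel fact you cite), and then evaluates at $(-n_j)$ using exactly the limit $\lim_{s\to -n}\frac{1}{(e^{2\pi is}-1)\Gamma(s)}=\frac{(-1)^n n!}{2\pi i}$ together with the Taylor expansion \eqref{Fro-def-r}. Your anticipated ``obstacle'' about the iterated limit is handled in the paper implicitly by the absolute convergence of the multiple Hankel integral, just as you suggest.
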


\begin{proof}
Since the contour integral on the right-hand side of \eqref{Cont} is holomorphic for all $(s_k)\in \mathbb{C}^r$, we see that $\zeta_r((s_j);(\xi_j);(\gamma_j))$ can be meromorphically continued to $\mathbb{C}^r$ and its possible singularities are located on hyperplanes $s_k=l_k\in \mathbb{N}$ $(1\leqslant k \leqslant r)$ outside of the region of convergence because $(e^{2\pi i s_k}-1)\Gamma(s_k)$ does not vanish at $s_k\in \mathbb{Z}_{\leqslant 0}$. Furthermore, for $s_k=l_k\in \mathbb{N}$, the integrand of the contour integral with respect to $t_k$ on the last member of \eqref{Cont} is holomorphic around $t_k=0$. Therefore, for $l_k\in \mathbb{N}$, we see that
\begin{align*}
    &\lim_{s_k\to l_k}\int_{\mathcal{C}}
    \mathfrak{H}_r  (( t_j ),( \xi_j^{-1}), ( \gamma_j))
    t_k^{s_k-1}dt_k
    =\int_{C_\varepsilon}
\mathfrak{H}_r  (( t_j ),( \xi_j^{-1}), ( \gamma_j))
    t_k^{l_k-1}dt_k
     =0,
\end{align*}
because of the residue theorem, where $C_\varepsilon=\{\varepsilon e^{i\theta}\,|\,0\leqslant \theta\leqslant 2\pi\}$ for any sufficiently small $\varepsilon$. Consequently this implies that $\zeta_r((s_j);(\xi_j);(\gamma_j))$ has no singularity on $s_k=l_k$, namely $\zeta_r((s_j);(\xi_j);(\gamma_j))$ is entire. 
Finally, substituting \eqref{Fro-def-r} into \eqref{Cont}, setting $(s_j)=(-n_j)$ and 
using
\begin{equation*}
  \lim_{s\to -n}\frac{1}{(e^{2\pi is}-1)\Gamma(s)}=\frac{(-1)^n n!}{2\pi i}\quad (n\in \mathbb{N}_0),
\end{equation*}
we obtain \eqref{multi-val}. 
Thus we complete the proof of Theorem \ref{T-multiple}.
\end{proof}

Such a type of explicit formulas for non-positive integer values of twisted multiple
zeta-functions in several variables was already obtained by 
de Crisenoy \cite{Cr2006} in a much more general context (with real coefficients) by
a quite different method. 
Some partial cases of Theorem \ref{T-multiple} 
are also recovered by the results in 
Matsumoto-Tanigawa \cite{MT2003} and Matsumoto-Tsumura \cite{MaTsAA}.

In \cite[Theorem 1]{MaJNT}, it is shown that the multiple zeta-function $\zeta_r((s_j);(\xi_j);(\gamma_j))$ 
of  the generalized Euler-Zagier-Lerch type \eqref{Barnes-Lerch} with all $\xi_j=1$ 
is meromorphically continued to the whole space $\mathbb C^r$
with \textit{possible} singularities. 
A more general type of multiple zeta-function is 
treated in \cite{Ko2010}, 
where equation \eqref{multi-val} without the assertion of being an entire function 
is shown in the case of 
$\xi_j\neq 1$ for all $j$ 
and the meromorphic continuation of 
$\zeta_r((s_j);(\xi_j);(\gamma_j))$ is also given. 
We stress that in a separate paper \cite{FKMT}, we construct a $p$-adic multiple 
$L$-function which can be regarded 
as a $p$-adic analogue of $\zeta_r((s_j);(\xi_j);(\gamma_j))$.



\begin{remark}\label{Rem-forthcoming}
Without assumption \eqref{non-unity assumption},
it should be noted that \eqref{Cont} does not hold generally, more strictly the third equality on the right-hand side does not hold because the Hankel contours necessarily 
cross the singularities of the integrand. 
\end{remark}

In 
our recent paper \cite{FKMT02}, we will show 
the following necessary and sufficient condition that 
$\zeta_r((s_j);(\xi_j);(\gamma_j))$ is entire, and will determine the exact locations of singularities when it is not entire:

\begin{theorem}\label{Forth-coming} 
Let $\xi_1,\ldots,\xi_r\in \mathbb{C}$ 
be roots of unity and 
$\gamma_1,\ldots,\gamma_r\in \mathbb{C}$ with $\Re \gamma_j >0 \ (1\leqslant j\leqslant r)$. Then 
$\zeta_r((s_j);(\xi_j);(\gamma_j))$ can be entire if and only if the condition \eqref{non-unity assumption} holds. 
When it is not entire, one of the following cases occurs:
\begin{enumerate}[{\rm (i)}]
\item The function $\zeta_r((s_j);(\xi_j);(\gamma_j))$ has infinitely many simple singular hyperplanes when $\xi_j=1$ for some $j$ $(1\leqslant j \leqslant r-1)$.
\item The function $\zeta_r((s_j);(\xi_j);(\gamma_j))$ has a unique simple singular hyperplane $s_r=1$ when $\xi_j\neq 1$ for all $j$ $(1\leqslant j \leqslant r-1)$ and $\xi_r=1$.
\end{enumerate}
\end{theorem}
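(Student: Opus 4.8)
\textbf{Proof proposal for Theorem \ref{Forth-coming}.}

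The plan is to extend the Mellin--Barnes (Hankel contour) representation \eqref{Cont} to the general case, and to read off the singularities from the poles that the gamma-factors and the residues of the integrand jointly produce. First I would note that the ``if'' direction of the equivalence is exactly Theorem \ref{T-multiple}, so only the ``only if'' direction together with the classification (i), (ii) needs work. For this I would proceed by induction on the number of innermost coordinates, peeling off the $m_r$-sum. Writing $n=m_1+\cdots+m_{r-1}$ fixed and summing the geometric-type series in $m_r$ with the twisting factor $\xi_r^{m_r}$, one separates the two regimes: when $\xi_r\neq 1$ the sum $\sum_{m_r\geqslant 1}\xi_r^{m_r}(\cdots+m_r\gamma_r)^{-s_r}$ is (by the $r=1$ case of Theorem \ref{T-multiple}, i.e.\ the Lerch-type analysis) entire in $s_r$ with good growth, whereas when $\xi_r=1$ it is essentially a Hurwitz zeta-function and contributes the single pole $s_r=1$. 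This already yields case (ii): if $\xi_1,\dots,\xi_{r-1}\neq 1$ but $\xi_r=1$, the Hurwitz-type factor contributes $s_r=1$, and the remaining $(r-1)$-fold sum is, after the usual binomial expansion of $(n+m_r\gamma_r)^{-s_r}$ and a Mellin convolution, governed by $\zeta_{r-1}$ with \emph{all} twisting roots $\neq 1$, hence entire by Theorem \ref{T-multiple}; one must check that the convolution integral introduces no further poles, which is where the absolute-convergence estimate along the shifted contour is used.

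For the remaining (harder) direction --- that $\xi_j=1$ for some $j\leqslant r-1$ forces infinitely many singular hyperplanes (case (i)) --- I would argue that the pole structure propagates ``outward.'' Concretely, isolate the smallest index $j_0\leqslant r-1$ with $\xi_{j_0}=1$. Summing over $m_{j_0}$ produces a Hurwitz-type zeta in the variable $s_{j_0}+\cdots+s_r$ (because the relevant linear form is $m_1\gamma_1+\cdots+m_{j_0}\gamma_{j_0}$ and it appears in all factors with index $\geqslant j_0$), and expanding binomially in the remaining $m_{j_0}$-dependence generates, for each $N\in\mathbb{N}_0$, a term carrying a simple pole at $s_{j_0}+\cdots+s_r=1-N+(\text{shift})$; the gamma-function denominators $(e^{2\pi i s_k}-1)\Gamma(s_k)$ for $k>j_0$ do \emph{not} kill these because they involve single variables $s_k$, not the sum. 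The key point is a non-vanishing/non-cancellation statement: the leading coefficient of each such pole is, up to an explicit nonzero constant and a twisted-Bernoulli-type factor coming from the other $\xi_k$'s, a value of a lower multiple zeta-function which is not identically zero as a function of the free variables. I would extract this coefficient as a residue and identify it with $\aa(\cdots)$ via Proposition \ref{prop-M-Bern} and Theorem \ref{T-multiple}; since these twisted Bernoulli numbers are nonzero rational functions of the $\xi_k$ (see \eqref{TBN-exam}), the residues are not identically zero, so the hyperplanes are genuine singularities, and there are infinitely many of them (one for each $N$).

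The main obstacle I anticipate is precisely this last non-cancellation argument: one has to show that when several of the $\xi_j$ ($j\leqslant r-1$) equal $1$, the infinitely many candidate poles coming from the various $m_{j_0}, m_{j_1},\dots$ summations do not conspire to cancel one another. I would handle this by choosing a convenient generic slice of $\mathbb{C}^r$ (e.g.\ fixing $s_{j_0+1},\dots,s_r$ so that no other factor is near a pole, and letting $s_{j_0}$ vary) so that exactly one family of poles is visible, and then invoking the explicit residue computation above; a clean way to organize it is to iterate the one-variable peeling lemma and track, at each stage, that the ``constant term'' (the part that survives specialization) is a product of nonzero twisted-Bernoulli factors times a lower $\zeta_r$-value. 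A secondary technical point is justifying throughout that all contour integrals converge absolutely and that differentiation/limits may be passed under the integral sign away from the listed hyperplanes; this is routine given the exponential decay of $\mathfrak{H}_r$ along the Hankel contours, exactly as in the passage leading to \eqref{Cont}, but it must be stated carefully since the contours now cross singularities of the integrand when some $\xi_j=1$.
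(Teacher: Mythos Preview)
The paper does not prove this theorem: it is explicitly announced as a result of the forthcoming paper \cite{FKMT02} (see the sentence immediately preceding the statement and Remark \ref{Rem-forthcoming}), and no argument is supplied here. There is therefore no in-paper proof to compare your proposal against.

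On your proposal itself: the ``if'' direction and case (ii) are sound in outline. In (ii), summing over $m_r$ (which occurs only in the last factor) gives a Hurwitz zeta in $s_r$ whose residue at $s_r=1$ is independent of $m_1,\dots,m_{r-1}$, so the residue of the full function along $s_r=1$ is $\gamma_r^{-1}\zeta_{r-1}((s_j)_{j\leqslant r-1};(\xi_j)_{j\leqslant r-1};(\gamma_j)_{j\leqslant r-1})$, which is entire and not identically zero by Theorem \ref{T-multiple}. The \emph{uniqueness} part, however, still needs an honest analytic-continuation argument for the regular remainder (the Hurwitz zeta minus its polar part has polynomial growth in the shift parameter, and you must show the resulting $(r-1)$-fold twisted sum is entire in $s_1,\dots,s_{r-1}$), not merely an estimate.

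In case (i) your organization has a genuine slip. The variable $m_{j_0}$ occurs in \emph{every} factor with index $j\geqslant j_0$, and for $j>j_0$ those factors also contain $m_{j_0+1},\dots,m_j$; hence the $m_{j_0}$-sum is not a single Hurwitz zeta in $s_{j_0}+\cdots+s_r$ as you write, and its residues are not directly products of the numbers $\aa_n(\xi_k)$ from \eqref{TBN-exam}. The standard route is to peel variables from the inside ($m_r$ first, then $m_{r-1}$, \dots), using a Mellin--Barnes integral at each step to separate the new variable from the previously accumulated linear form; the candidate singular hyperplanes then arise from shifting that contour across poles of $\Gamma$, and the nonvanishing of their residues is established by an inductive comparison with lower-depth $\zeta_{r'}$ (in particular with the known locus \eqref{EZ-sing} when several consecutive $\xi_j$ equal $1$). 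Your generic-slice idea is the right device for ruling out accidental cancellation, but it has to be applied within this inside-out induction rather than to the $m_{j_0}$-sum directly.
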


\section{Desingularization of multiple zeta-functions
}\label{c-1-zeta}
In this section we introduce and develop our method of  desingularization.
In our previous section we saw that 
the multiple zeta-function 
$\zeta_r((s_j);(\gamma_j))$
of  the generalized Euler-Zagier type \eqref{gene-EZ}
is meromorphically  continued to the whole space with `true' singularities 
whilst the multiple zeta function $\zeta_r((s_j);(\xi_j);(\gamma_j))$
of  the generalized Euler-Zagier-Lerch type \eqref{Barnes-Lerch} 
under the non-unity assumption \eqref{non-unity assumption} 
is analytically continued to $\mathbb C^r$ as an entire function.
Our desingularization is a technique to resolve all singularities  of $\zeta_r((s_j);(\gamma_j))$
to produce an entire function  $\zeta^{\rm des}_r((s_j);(\gamma_j))$.
Consider the following expression:
\begin{equation}\label{nonsense equation}
\zeta^{\rm des}_r((s_j);(\gamma_j)):=\lim_{c \to 1}
\frac{1}{(c-1)^r}
\sum_{\xi_1^c=1 \atop \xi_1\not=1}\cdots \sum_{\xi_r^c=1 \atop \xi_r\not=1}
\zeta_r((s_j);(\xi_j);(\gamma_j)).
\end{equation}
This is surely nonsense, because $c\in \mathbb{N}_{>1}$ on the right-hand side.
However, because of the holomorphy of $\zeta_r((s_j);(\xi_j);(\gamma_j))$, we observe that 
the left-hand side is also (at least formally) holomorphic.
Our fundamental idea is symbolized in this primitive expression \eqref{nonsense equation}. 
Our idea is motivated from a very simple observation 
$$(1-s)\zeta(s)=
\lim_{c\to 1}\,\frac{1}{c-1}\,\left(c^{1-s}-1\right)\zeta(s).
$$
Here on  the left-hand side we find  an entire function $(1-s)\zeta(s)$,
which is merely a product of $(1-s)$ and the meromorphic function $\zeta (s)$
with a simple pole at $s=1$.
While 
on the right hand-side, when $c\in \mathbb{N}_{>1}$,  we may associate a decomposition
\begin{equation*}
\frac{1}{c-1}\,\left(c^{1-s}-1\right)\zeta(s)=
\frac{1}{c-1}\sum_{\xi^c=1 \atop \xi\not=1}\phi(s;\xi)
\end{equation*}
into a sum of entire functions $\phi(s;\xi)=\zeta_1(s;\xi;1)$.

Our desingularization method, 
a rigorous mathematical formulation to
give  a meaning of \eqref{nonsense equation}
will be settled in Definition \ref{def-MZF-2}.
An application of desingularization to the Riemann zeta function $\zeta(s)$
is given in Example \ref{Exam-SH}.
We will see in Theorem \ref{T-c-1-zeta}
that our $\zeta^{\rm des}_r((s_j);(\gamma_j))$  is entire on the whole space ${\mathbb C}^r$. 
We stress that $\zeta_r^{\rm des}((s_j);(\gamma_j))$ is worthy of 
an important object from the viewpoint of the analytic theory of 
multiple zeta-functions. In fact, its values at not only all positive 
or all non-positive integer points but also arbitrary integer points are 
fully determined (see Example \ref{Exam-1-33}).  

Theorem \ref{C-Zr} will prove that suitable combinations of Bernoulli numbers attain the 
special values at non-positive integers of $\zeta^{\rm des}_r((s_j);(\gamma_j))$.
Theorem \ref{Th-ex}
will reveal that our desingularized  multiple zeta-function $\zeta^{\rm des}_r((s_j);(\gamma_j))$
is actually given by a finite \lq linear' combination of 
the multiple zeta-function $\zeta_r((s_j+m_j);(\gamma_j))$ 
with some arguments appropriately shifted by $m_j\in{\mathbb Z}$ ($1\leqslant j \leqslant r$).
Example \ref{Z-EZ-double} and Remark \ref{EZ-double}
are our specific observations for double variable case.
%

\begin{definition} \label{def-MZF-2}
For $\gamma_1,\ldots,\gamma_r\in \mathbb{C}$ with $\Re \gamma_j >0 \quad (1\leqslant j\leqslant r)$, the \textbf{desingularized multiple zeta-function},
which we also call the \textbf{desingularization of} $\zeta_r((s_j);(\gamma_j))$,
is defined by
  \begin{align}
    & \zeta^{\rm des}_r((s_j);(\gamma_j)) \notag\\
    &:=\underset{c\in \mathbb{R}\setminus \{1\}}
{\lim_{c\to 1}}\frac{(-1)^r}{(c-1)^r}
    \prod_{k=1}^r \frac{1}{(e^{2\pi i s_k}-1)\Gamma(s_k)}\int_{\mathcal{C}^{r}}
    \widetilde{\mathfrak{H}}_r  (( t_j ); ( \gamma_j);c)\prod_{k=1}^r t_k^{s_k-1}dt_k\label{Cont-2-1} 
\end{align}
for $(s_j)\in \mathbb{C}^r$, 
where $\mathcal{C}$ is the Hankel contour used in \eqref{Cont}. Note that \eqref{Cont-2-1} is well-defined because the convergence of the contour integral and of the limit with respect to $c\to 1$ can be justified from Theorem \ref{T-c-1-zeta} (see below).
\end{definition}

\begin{remark}\label{conceptual idea}
By \eqref{Cont} and \eqref{tilde-H}, we may say that
equation \eqref{Cont-2-1} is a rigorous way to make sense of the nonsense equation
\eqref{nonsense equation}. 
\end{remark}

\begin{example}\label{Exam-SH}
In the case $r=1$, set $(r,\gamma_1)=(1,1)$ in \eqref{Cont-2-1}. Similarly to \cite[Theorem 4.2]{Wa}, 
we can easily see that 
\begin{align}
 \zeta^{\rm des}_1(s;1)& =\lim_{c \to 1}\frac{(-1)}{c-1}\cdot 
    \frac{1}{(e^{2\pi i s}-1)\Gamma(s)}\int_{\mathcal{C}}
\left(\frac{1}{e^t-1}-\frac{c}{e^{ct}-1}\right)t^{s-1}dt\notag\\
& =\lim_{c\to 1}\frac{(-1)}{c-1}\left(\zeta(s)-c\sum_{m=1}^\infty \frac{1}{(cm)^s}\right) \notag\\
         & =\lim_{c\to 1}\frac{(-1)}{c-1}\left(1-c^{1-s}\right)\zeta(s)=(1-s)\zeta(s).\label{def-Z1}
\end{align}
Hence $\zeta^{\rm des}_1(s;1)$ can be analytically continued to $\mathbb{C}$.
As was mentioned in Introduction, this is the "proto-type" of desingularization.
\end{example}

More generally we can prove the following theorem.

\begin{theorem}\label{T-c-1-zeta}
For $\gamma_1,\ldots,\gamma_r\in \mathbb{C}$ with $\Re \gamma_j >0 \quad (1\leqslant j\leqslant r)$, 
\begin{align}
    & \zeta^{\rm des}_r((s_j);(\gamma_j)) \notag\\
    &=
    \prod_{k=1}^r \frac{1}{(e^{2\pi i s_k}-1)\Gamma(s_k)}\int_{\mathcal{C}^{r}}
    \lim_{c \to 1}\frac{(-1)^r}{(c-1)^r}\widetilde{\mathfrak{H}}_r  (( t_j ); ( \gamma_j);c)\prod_{k=1}^r t_k^{s_k-1}dt_k\notag\\
    &=\prod_{k=1}^r \frac{1}{(e^{2\pi i s_k}-1)\Gamma(s_k)}\notag\\
    & \ \times \int_{\mathcal{C}^{r}}
    \prod_{j=1}^{r} \lim_{c \to 1}\frac{(-1)}{c-1}\left( \frac{1}{\exp\left(\gamma_j \sum_{k=j}^r t_k\right)-1}-\frac{c}{\exp\left(c\gamma_j \sum_{k=j}^r t_k\right)-1}\right)\prod_{k=1}^r t_k^{s_k-1}dt_k,\label{Cont-2}
  \end{align}
which can be analytically continued to $\mathbb{C}^r$ as an entire function in $(s_j)$. 
\end{theorem}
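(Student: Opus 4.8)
The plan is to justify, in order: (a) that the contour integral in \eqref{Cont-2-1} converges and the limit $c\to 1$ can be brought inside the integral; (b) that the resulting limit of $\widetilde{\mathfrak{H}}_r$ divided by $(c-1)^r$ exists as a holomorphic function of $(t_j)$ around the origin; and (c) that the resulting expression is entire in $(s_j)$. For step (a), I would fix a compact set in $(s_j)$-space and a small neighbourhood of $c=1$ in $\mathbb{R}$, and show uniform convergence of the Hankel-contour integral on this product set. The key input is the second line of \eqref{def-tilde-H}: each factor of $\widetilde{\mathfrak{H}}_r((t_j);(\gamma_j);c)$ equals $\sum_{m=1}^\infty (1-c^m)B_m (\gamma_j\sum_{k=j}^r t_k)^{m-1}/m!$, which is an entire function of $(t_j)$ (and jointly continuous in $c$) vanishing identically at $c=1$. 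Hence near the origin $\widetilde{\mathfrak{H}}_r$ is bounded uniformly in $c$ near $1$, and on the two horizontal rays of $\mathcal{C}$ the decay of $\prod_k t_k^{s_k-1}$ together with the exponential decay of each factor $\bigl(\exp(\gamma_j\sum t_k)-1\bigr)^{-1}$ for $\Re\gamma_j>0$ gives a dominating integrable function independent of $c$ (for $c$ in, say, $[1/2,2]$). This legitimises interchanging $\lim_{c\to 1}$ with $\int_{\mathcal{C}^r}$, which is precisely the first equality of \eqref{Cont-2}.

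For step (b), observe from $\frac{1-c^m}{c-1}\to -m$ as $c\to 1$ that
\[
\lim_{c\to 1}\frac{-1}{c-1}\Bigl(\frac{1}{e^{u}-1}-\frac{c}{e^{cu}-1}\Bigr)
=\lim_{c\to 1}\frac{-1}{c-1}\sum_{m=1}^\infty (1-c^m)B_m\frac{u^{m-1}}{m!}
=\sum_{m=1}^\infty m\,B_m\frac{u^{m-1}}{m!}
=\sum_{m=1}^\infty B_m\frac{u^{m-1}}{(m-1)!},
\]
with $u=\gamma_j\sum_{k=j}^r t_k$, the convergence being uniform on compact $t$-sets (the tail bound is uniform in $c\in[1/2,2]$ since $|1-c^m|/|c-1|\le C m$ there). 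Taking the product over $j$ and using the joint continuity/uniformity gives that $\lim_{c\to1}\frac{(-1)^r}{(c-1)^r}\widetilde{\mathfrak{H}}_r$ exists and equals $\prod_{j=1}^r\sum_{m=1}^\infty B_m\frac{u_j^{m-1}}{(m-1)!}$, an entire function of $(t_j)$, holomorphic in particular around the origin; this is the second equality of \eqref{Cont-2}. (Since this limit function is holomorphic at $t_j=0$, the earlier assertion in Definition \ref{define-tilde-H}'s surrounding remark — that Bernoulli numbers appear in its Maclaurin expansion — is simultaneously confirmed.)

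For step (c), the entireness follows exactly as in the proof of Theorem \ref{T-multiple}. The contour integral $\int_{\mathcal{C}^r}\bigl(\lim_c\ldots\bigr)\prod_k t_k^{s_k-1}dt_k$ is holomorphic in $(s_j)\in\mathbb{C}^r$ because the integrand is holomorphic near the origin and the integral converges locally uniformly; hence $\zeta_r^{\rm des}$ is meromorphic with possible poles only where $(e^{2\pi i s_k}-1)\Gamma(s_k)$ vanishes, i.e. on $s_k=l_k\in\mathbb{N}$. But at $s_k=l_k\in\mathbb{N}$ the contour integral in $t_k$ collapses (by the residue theorem, letting $\varepsilon\to 0$) to $\int_{C_\varepsilon}(\cdots)t_k^{l_k-1}dt_k=0$, since the limit integrand is holomorphic at $t_k=0$; this zero cancels the pole, so $\zeta_r^{\rm des}$ is entire. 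The main obstacle I anticipate is step (a): producing a single dominating function valid uniformly for $c$ in a punctured neighbourhood of $1$, particularly organising the $\varepsilon$-circle contribution and the behaviour of the multi-dimensional Hankel integrand so that dominated convergence genuinely applies; once the interchange of limit and integral is secured, steps (b) and (c) are routine.
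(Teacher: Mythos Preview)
Your overall strategy---dominated convergence to interchange $\lim_{c\to1}$ with $\int_{\mathcal{C}^r}$, then entireness via the vanishing of the small-circle integral at positive integer $s_k$---is exactly the paper's approach, and your steps (b) and (c) are essentially correct and parallel the paper's argument.

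The gap is precisely where you anticipate it, in step (a), and it is a genuine gap rather than a formality. The quantity you must dominate on $\mathcal{C}^r$ is not $\widetilde{\mathfrak{H}}_r$ but $(c-1)^{-r}\widetilde{\mathfrak{H}}_r$, uniformly for $c$ near $1$. Your sentence ``near the origin $\widetilde{\mathfrak{H}}_r$ is bounded uniformly in $c$'' is true but useless once you divide by $(c-1)^r$. On the rays, invoking ``the exponential decay of each factor $(\exp(\gamma_j\sum t_k)-1)^{-1}$'' bounds only one of the two terms in each bracket and says nothing about the quotient by $c-1$. (Also, $\prod_k t_k^{s_k-1}$ does not decay along the rays; it grows polynomially. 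All the integrability has to come from the $\widetilde{\mathfrak{H}}_r$ factor.) The tail bound you obtain in step (b), namely $|1-c^m|/|c-1|\leqslant Cm$, only helps inside the disc of convergence $|u|<2\pi$ of the Bernoulli series; on the rays the power series diverges and this route gives nothing.

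The paper supplies exactly the missing ingredient as Lemma~\ref{L-1-4-2}: for $|c-1|$ small there is a constant $A>0$, independent of $c$, with
\[
|c-1|^{-1}\Bigl|\frac{1}{e^y-1}-\frac{c}{e^{cy}-1}\Bigr|<A\,e^{-\Re y/2}
\]
for all $y$ in $\nn(1)\cup\sss(\theta)$; Lemma~\ref{L-1-4-1} checks that every value $\gamma_j\sum_{k\geqslant j}t_k$ with $(t_k)\in\mathcal{C}^r$ lands in such a region. The proof of Lemma~\ref{L-1-4-2} is a direct computation: for $y$ outside $\nn(1)$ one rewrites the numerator as $e^{cy}-e^y+(1-c)(e^y-1)$, uses $|e^{(c-1)y}-1|/|c-1|\leqslant |y|e^{|(c-1)y|}$, and then $|y|\leqslant \Re y/\cos\theta$ in the sector. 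Multiplying over $j$ yields the explicit dominating function
\[
F((t_j))=A^r\prod_{k=1}^r\exp\Bigl(-\Re\bigl(t_k\textstyle\sum_{j=1}^k\gamma_j/2\bigr)\Bigr),
\]
which is integrable against $\prod_k|t_k^{s_k-1}|$ on $\mathcal{C}^r$. With this in hand, Lebesgue's theorem applies and your steps (b) and (c) finish the proof as written.

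One minor correction: the series $\sum_{m\geqslant1}(1-c^m)B_m u^{m-1}/m!$ is not entire in $u$ (the Bernoulli generating series has radius $2\pi$); the function $\frac{1}{e^u-1}-\frac{c}{e^{cu}-1}$ is meromorphic with poles at $2\pi ik$ and $2\pi ik/c$ for $k\neq0$. This does not harm the argument, since the contour keeps $\gamma_j\sum t_k$ away from those poles, but the claim of entireness should be dropped.
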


For the proof of \eqref{Cont-2}, it is enough to prove that 
if $|c-1|$ is sufficiently small, then there exists a function $F:\,\mathcal{C}^{r} \to \mathbb{R}_{>0}$ independent of $c$ such that 
\begin{gather}
  |(c-1)^{-r}\widetilde{\mathfrak{H}}_r  ((t_j);(\gamma_j);c)|\leqslant F((t_j)) \qquad((t_j) \in \mathcal{C}^{r}), \label{eq-01}\\
  \int_{\mathcal{C}^{r}} F((t_j))\prod_{k=1}^r |t_k^{s_k-1}dt_k|<\infty.\label{eq-02}
\end{gather}
Now we aim to construct $F((t_j))$ which satisfies these conditions. 
Let $\nn(\varepsilon)=\{z\in\mathbb{C}~|~|z|\leqslant \varepsilon\}$ and 
$\sss(\theta)=\{z\in\mathbb{C}~|~|\arg z|\leqslant \theta\}$.

Let $\gamma_1,\ldots,\gamma_r\in \mathbb{C}$ with $\Re \gamma_j >0 \quad (1\leqslant j\leqslant r)$. 
Then the following lemma is obvious.

\begin{lemma}\label{L-1-4-1}
There exist $\varepsilon>0$ and $0<\theta<\pi/2$ such that 
\begin{equation}
  \gamma_j \sum_{k=j}^r t_k\in \nn(1)\cup \sss(\theta) 
\end{equation}
for any $(t_j)\in \mathcal{C}^{r}$, where $\mathcal{C}$ is the Hankel contour involving a circle around the origin of radius $\varepsilon$ (see \eqref{Cont}). 
\end{lemma}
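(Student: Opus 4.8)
The plan is to analyze the set $\{\gamma_j\sum_{k=j}^r t_k : (t_j)\in\mathcal C^r\}$ directly, using that each $t_k$ ranges over a single Hankel contour $\mathcal C$ with small radius $\varepsilon$. First I would observe that a point $t$ on $\mathcal C$ is either in the small disc $\nn(\varepsilon)$ (the circular part) or lies on one of the two edges running along the positive real axis; in the latter case $t = u e^{\pm i\alpha}$ with $u\geqslant 0$ and $\alpha$ as small as we like (the two rays of the Hankel contour can be taken arbitrarily close to the positive real axis). Consequently each $t_k$ lies in $\nn(\varepsilon)\cup\sss(\alpha)$ for any prescribed small $\alpha>0$. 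The key elementary fact is that $\sss(\alpha)$ is a convex cone, so a sum $\sum_{k=j}^r t_k$ of terms each of which is either in $\nn(\varepsilon)$ or in $\sss(\alpha)$ lies in $\nn(r\varepsilon)\cup\sss(\alpha)$: if at least one summand is in $\sss(\alpha)$, write the sum as (a point of $\sss(\alpha)$) plus (a vector of length $\leqslant r\varepsilon$), and note that perturbing a point of the open cone $\sss(\alpha/2)$ by a bounded amount keeps it inside $\sss(\alpha)$ provided its modulus is large enough — while if its modulus is small the whole sum is in $\nn(C\varepsilon)$ for a suitable constant.

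Next I would bring in $\gamma_j$. Writing $\gamma_j = \rho_j e^{i\beta_j}$ with $\rho_j>0$ and $|\beta_j|<\pi/2$ (since $\Re\gamma_j>0$), multiplication by $\gamma_j$ rotates by $\beta_j$ and scales by $\rho_j$. Set $\beta = \max_j|\beta_j| < \pi/2$ and fix any $\theta$ with $\beta < \theta < \pi/2$. Choosing $\alpha$ small enough that $\beta+\alpha<\theta$, we get $\gamma_j\sss(\alpha)\subseteq\sss(\theta)$, and $\gamma_j\nn(r\varepsilon)\subseteq\nn(\rho_j r\varepsilon)$. Finally, shrinking the Hankel radius $\varepsilon$ so that $(\max_j\rho_j)\, r\,\varepsilon \leqslant 1$ (together with whatever margin the cone-perturbation argument above demands), we obtain $\gamma_j\sum_{k=j}^r t_k\in\nn(1)\cup\sss(\theta)$ for all $j$ and all $(t_j)\in\mathcal C^r$. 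This is exactly the assertion, with the chosen $\varepsilon$ and $\theta$; the choices are uniform in $j$ because we took maxima over the finitely many $j$.

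The only genuinely delicate point — and the reason the authors call the lemma "obvious" rather than trivial — is the cone-stability step: a sum of vectors each lying in a thin cone need not stay in that cone once we allow a few of them to be small perturbations from $\nn(\varepsilon)$, so one must track how the bounded error from the disc terms can tilt the resulting vector out of $\sss(\theta)$. The clean way around this is the dichotomy already built into the target set $\nn(1)\cup\sss(\theta)$: either the cone contributions dominate, and then the modulus of $\sum t_k$ is bounded below by a quantity forcing the bounded perturbation to leave it inside $\sss(\theta)$ (by elementary trigonometry, a point of modulus $R$ in $\sss(\theta')$ perturbed by a vector of length $\delta$ stays in $\sss(\theta)$ as soon as $R\sin(\theta-\theta')\geqslant\delta$), or else the cone contributions are themselves small and the total sum lands in $\nn(1)$ after shrinking $\varepsilon$. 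Making this quantitative — choosing the opening angle $\alpha$ of the Hankel rays, then $\theta$, then the radius $\varepsilon$ in that order — completes the argument, and all of it reduces to the triangle inequality and a sine estimate, so no real computation is needed.
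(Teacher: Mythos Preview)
Your argument is correct and fills in what the paper omits: the authors declare the lemma ``obvious'' and give no proof at all, so there is nothing to compare against. Two minor remarks. First, in this paper the straight portions of $\mathcal{C}$ lie exactly on the positive real axis (see the description just after \eqref{Cont}), so your auxiliary angle $\alpha$ may be taken equal to $0$; the ``cone part'' of the sum $\sum_{k=j}^r t_k$ is then simply a non-negative real number, which streamlines the bookkeeping. Second, your intermediate assertion that $\sum_{k=j}^r t_k \in \nn(r\varepsilon)\cup\sss(\alpha)$ is not literally true as written (a positive real $S$ perturbed by a disc contribution of modulus $\leqslant r\varepsilon$ need not remain in $\sss(\alpha)$, nor in $\nn(r\varepsilon)$), but you yourself flag exactly this as the delicate step, and your final paragraph supplies the correct repair via the dichotomy on the size of $S$ together with the $R\sin(\theta-\theta')\geqslant\delta$ estimate. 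With those small clean-ups the argument goes through as you outline.
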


Further we prove the following lemma.

\begin{lemma}\label{L-1-4-2}
Let $c\in \mathbb{R}\setminus \{1\}$ satisfying that $|c-1|$ is sufficiently small. Then there exists a constant $A>0$ independent of $c$ such that 
  \begin{equation}
    |c-1|^{-1}\Bigl|\frac{1}{e^y-1}-\frac{c}{e^{cy}-1}\Bigr|<Ae^{-\Re y/2}
  \end{equation}
for any 
  $y\in \nn(1)\cup \sss(\theta)$.
\end{lemma}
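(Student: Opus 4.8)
The plan is to route everything through the entire function $\Phi(t):=t/(e^t-1)$ (holomorphic on $|t|<2\pi$), which is precisely what encodes the cancellation of the $1/y$-poles of $\frac1{e^y-1}$ and $\frac c{e^{cy}-1}$ at $y=0$. First I would record the exact identity
\[
\frac1{e^y-1}-\frac c{e^{cy}-1}=\frac1y\bigl(\Phi(y)-\Phi(cy)\bigr)=-\int_1^c\Phi'(by)\,db ,
\]
the last step by the substitution $u=by$; it is valid for $y\in\nn(1)\cup\sss(\theta)$ once $|c-1|$ is small enough that the interval between $1$ and $c$ lies in $[3/4,5/4]$ (then $by$ stays away from the poles $\{2\pi ik:k\neq0\}$ of $\Phi'$: if $|y|\leqslant1$ because $|by|\leqslant 5/4<2\pi$, and if $y\in\sss(\theta)$ because $by\in\sss(\theta)$ while the poles lie on the imaginary axis), and it extends to $y=0$ by continuity. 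Consequently the whole estimate reduces to a bound \emph{independent of $c$}:
\[
|c-1|^{-1}\Bigl|\frac1{e^y-1}-\frac c{e^{cy}-1}\Bigr|\ \leqslant\ \max_{3/4\leqslant b\leqslant 5/4}\bigl|\Phi'(by)\bigr| .
\]

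For $y\in\nn(1)$ this is immediate: $|by|\leqslant 5/4$, so $|\Phi'(by)|\leqslant M_0:=\sup_{|t|\leqslant 5/4}|\Phi'(t)|<\infty$, while $\Re y\leqslant 1$ gives $e^{-\Re y/2}\geqslant e^{-1/2}$, so the asserted inequality holds with any $A\geqslant M_0e^{1/2}$. The real content is the case $y\in\sss(\theta)$, and here I would prove the auxiliary claim: for each $\theta\in(0,\pi/2)$ there is $C_\theta>0$ with $|\Phi'(t)|\leqslant C_\theta(1+|t|)e^{-\Re t}$ whenever $|\arg t|\leqslant\theta$. Granting it, note $by\in\sss(\theta)$, use $|y|\cos\theta\leqslant\Re y$ and $3/4\leqslant b\leqslant 5/4$ to get $|\Phi'(by)|\leqslant C_\theta(1+\tfrac54|y|)e^{-\frac34\Re y}\leqslant C'_\theta(1+\Re y)e^{-\frac34\Re y}$; since $(1+u)e^{-u/4}$ is bounded on $u\geqslant0$, this is $\leqslant C''_\theta e^{-\Re y/2}$, and the lemma follows with $A=\max(M_0e^{1/2},C''_\theta)$.

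To prove the claim I would observe that the poles $2\pi ik$ ($k\neq0$) of $\Phi'$ all lie on the imaginary axis, hence at positive distance from the closed sector $\overline{\sss(\theta)}$, so $\Phi'$ is holomorphic and continuous there. When $\Re t\geqslant\log 2$ one has $|e^t-1|\geqslant e^{\Re t}-1\geqslant\tfrac12 e^{\Re t}$, and feeding this into $\Phi'(t)=\frac1{e^t-1}-\frac{te^t}{(e^t-1)^2}$ yields $|\Phi'(t)|\leqslant(2+4|t|)e^{-\Re t}\leqslant 4(1+|t|)e^{-\Re t}$; when $\Re t<\log 2$ the condition $|\arg t|\leqslant\theta$ forces $|t|<\log2/\cos\theta$, a bounded region on which $\Phi'$ is bounded, while there $(1+|t|)e^{-\Re t}\geqslant\tfrac12$, so a bound of the same shape holds. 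The main obstacle, as I see it, is not any single estimate but the discipline of keeping every constant genuinely independent of $c$ as $c\to1$ (handled by pinning $b\in[3/4,5/4]$) together with the fact that one must \emph{not} bound the two terms of $\Phi'$ — equivalently the two fractions $\frac1{e^y-1}$ and $\frac c{e^{cy}-1}$ — separately when $|t|$ (resp.\ $|y|$) is small; it is exactly to sidestep this that the argument is organized around the entire function $\Phi$ and split into the disk $\nn(1)$ and the sector $\sss(\theta)$.
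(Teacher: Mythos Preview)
Your proof is correct and takes a genuinely different route from the paper. The paper does not introduce the generating function $\Phi(t)=t/(e^t-1)$ at all; instead, on $\sss(\theta)\setminus\nn(1)$ it rewrites
\[
\frac{1}{e^y-1}-\frac{c}{e^{cy}-1}=\frac{e^{cy}-e^y+(1-c)(e^y-1)}{(e^y-1)(e^{cy}-1)},
\]
splits off the term $1/|e^{cy}-1|$, and controls the remaining piece by the elementary Taylor bound $|e^{(c-1)y}-1|\leqslant |c-1|\,|y|\,e^{|(c-1)y|}$ together with $|y|\leqslant \Re y/\cos\theta$. On $\nn(1)$ the paper simply asserts boundedness. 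Your organization around the identity $\frac{1}{e^y-1}-\frac{c}{e^{cy}-1}=-\int_1^c\Phi'(by)\,db$ swallows the $c$-dependence into a single supremum over $b\in[3/4,5/4]$ and makes the $\nn(1)$ case an immediate consequence of the continuity of $\Phi'$ on a fixed compact disk; the auxiliary decay estimate $|\Phi'(t)|\leqslant C_\theta(1+|t|)e^{-\Re t}$ then handles the sector uniformly in $b$. The paper's approach is more bare-handed and needs no auxiliary function, but it leaves more to the reader (the bound on $\nn(1)$ and on the split-off term $1/|e^{cy}-1|$ are not spelled out); your approach is slightly longer to set up but yields constants that are visibly independent of $c$ at every step.
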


\begin{proof}
It is noted that there exists a constant $C>0$ such that 
  \begin{equation*}
    |c-1|^{-1}\Bigl|\frac{1}{e^y-1}-\frac{c}{e^{cy}-1}\Bigr|<C\quad (y\in \nn(1)),
  \end{equation*}
where we interpret this inequality for $y=0$ as that for $y\to 0$. 
Also, for any $y\in \sss(\theta)\setminus \nn(1)$, we have
  \begin{equation*}
    \begin{split}
      |c-1|^{-1}
      \Bigl|\frac{1}{e^y-1}-\frac{c}{e^{cy}-1}\Bigr|
      &=
      |c-1|^{-1}
      \Bigl|\frac{e^{cy}-ce^y+c-1}{(e^y-1)(e^{cy}-1)}\Bigr|
      \\
      &=
      |c-1|^{-1}
      \Bigl|\frac{e^{cy}-e^y+(1-c)(e^y-1)}{(e^y-1)(e^{cy}-1)}\Bigr|
      \\
      &\leqslant \
      |c-1|^{-1}
      \frac{|e^{cy}-e^y|}{|e^y-1||e^{cy}-1|}
      +\frac{1}{|e^{cy}-1|}.
    \end{split}
  \end{equation*}
Hence it is necessary to estimate 
\begin{equation*}
  |c-1|^{-1}
  \frac{|e^{cy}-e^y|}{|e^y-1||e^{cy}-1|}.
\end{equation*}
We note that
\begin{equation*}
  \begin{split}
    \Bigl|\frac{e^{ay}-1}{a}\Bigr|&=\Bigl|\sum_{j=1}^\infty \frac{a^{j-1}y^j}{j!}\Bigr|
    \leqslant \
    |y|\sum_{l=0}^\infty \frac{|ay|^{l}}{l!}
\leqslant \
    |y|e^{|ay|}.
  \end{split}
\end{equation*}
Since $|y|\leqslant \Re y/\cos\theta$, we have
  \begin{equation*}
    \begin{split}
      |c-1|^{-1}
      \frac{|e^{cy}-e^y|}{|e^y-1||e^{cy}-1|}
      &=
      \frac{1}{|1-e^{-y}||e^{cy}-1|}
      \frac{|e^{(c-1)y}-1|}{|c-1|}
      \\
      &\leqslant
      \frac{1}{|1-e^{-y}||e^{cy}-1|}
      |y|e^{|(c-1)y|}
      \\
      &\leqslant
      \frac{|y|e^{\Re y(|c-1|/\cos\theta)}}{|1-e^{-y}||e^{cy}-1|}.
    \end{split}
  \end{equation*}
Therefore, if $|c-1|$ is sufficiently small, then there exists a constant $A>0$ such that 
  \begin{equation*}
      |c-1|^{-1}
      \frac{|e^{cy}-e^y|}{|e^y-1||e^{cy}-1|}
      \leqslant
      Ae^{-\Re y/2}.
  \end{equation*}
This completes the proof.
\end{proof}

\begin{proof}[Proof of Theorem \ref{T-c-1-zeta}]
With the notation provided in Lemmas \ref{L-1-4-1} and \ref{L-1-4-2}, we set
\begin{equation*}
  \begin{split}
    F((t_j))&=A^r \prod_{j=1}^r \exp\left(-\Re (\gamma_j \sum_{k=j}^r t_k/2)\right)
    =A^r \exp\left(-\sum_{j=1}^r\Re (\gamma_j \sum_{k=j}^r t_k/2)\right)
    \\
    &=A^r \exp\left(-\sum_{k=1}^r\Re (t_k(\sum_{j=1}^k \gamma_j /2))\right)
    =A^r \prod_{k=1}^r\exp\left(-\Re (t_k(\sum_{j=1}^k \gamma_j /2))\right).
  \end{split}
\end{equation*}
Then it is clear that $F((t_j))$ satisfies 
\eqref{eq-01} and \eqref{eq-02}.
Hence, by Lebesgue's convergence theorem we see that \eqref{Cont-2} holds. 

Similarly to the proof of Theorem \ref{T-multiple}, 
since the contour integral on the right-hand side of \eqref{Cont-2} is holomorphic for all $(s_k)\in \mathbb{C}^r$, we see that $\zeta^{\rm des}_r((s_j);(\gamma_j))$ can be meromorphically continued to $\mathbb{C}^r$ and its possible singularities are located on hyperplanes $s_k=l_k\in \mathbb{N}$ $(1\leqslant k \leqslant r)$ outside of the region of convergence because $(e^{2\pi i s_k}-1)\Gamma(s_k)$ does not vanish at $s_k\in \mathbb{Z}_{\leqslant 0}$. Furthermore, for $s_k=l_k\in \mathbb{N}$, the integrand of the contour integral with respect to $t_k$ on the right-hand side of \eqref{Cont-2} is holomorphic around $t_k=0$. Therefore, for $l_k\in \mathbb{N}$, we see that
\begin{align*}
    &\lim_{s_k\to l_k}\int_{\mathcal{C}}
\lim_{c \to 1}\frac{(-1)}{c-1}\left( \frac{1}{\exp\left(\gamma_j \sum_{\nu=j}^r t_\nu\right)-1}-\frac{c}{\exp\left(c\gamma_j \sum_{\nu=j}^r t_\nu\right)-1}\right) t_k^{s_k-1}dt_k\\
    &=-\int_{C_\varepsilon}
\lim_{c \to 1}\frac{1}{c-1}\left( \frac{1}{\exp\left(\gamma_j \sum_{\nu=j}^r t_\nu\right)-1}-\frac{c}{\exp\left(c\gamma_j \sum_{\nu=j}^r t_\nu\right)-1}\right) t_k^{l_k-1}dt_k\\
& =0,
  \end{align*}
because of the residue theorem, where $C_\varepsilon=\{\varepsilon e^{i\theta}\,|\,0\leqslant \theta\leqslant 2\pi\}$ for any sufficiently small $\varepsilon$. Consequently this implies that $\zeta^{\rm des}_r((s_j);(\gamma_j))$ has no singularity on $s_k=l_k$, namely $\zeta^{\rm des}_r((s_j);(\gamma_j))$ is entire. 
Thus we complete the proof of Theorem \ref{T-c-1-zeta}.
\end{proof}

\begin{theorem}\label{C-Zr}
For $\gamma_1,\ldots,\gamma_r\in \mathbb{C}$ with $\Re \gamma_j >0 \quad (1\leqslant j\leqslant r)$, 
\begin{equation}
\begin{split}
& \prod_{j=1}^{r} \frac{\left(1-\gamma_j \sum_{k=j}^r t_k\right)\exp\left(\gamma_j \sum_{k=j}^r t_k\right)-1}{\left( \exp\left(\gamma_j \sum_{k=j}^r t_k\right)-1\right)^2} \\
& \quad =\sum_{m_1,\ldots,m_r=0}^\infty (-1)^{m_1+\cdots+m_r}\zeta^{\rm des}_r((-m_j);(\gamma_j))\prod_{j=1}^{r}\frac{t_j^{m_j}}{m_j!}.
\end{split}
\label{cont-gene}
\end{equation}
Hence, for $(k_j)\in \mathbb{N}_0^r$, 
\begin{align}
    \zeta^{\rm des}_r((-k_j);(\gamma_j))& =\prod_{l=1}^r (-1)^{k_l}k_l!\notag\\
    &\quad \times 
\sum_{\nu_{11}\geqslant 0 \atop {\nu_{12},\,\nu_{22}\geqslant 0 \atop {\,\cdots \atop {\nu_{1r},\,\ldots,\,\nu_{rr}\geqslant 0 \atop {\sum_{d=1}^j \nu_{dj}=k_j \atop (1\leqslant j\leqslant r)}}}}}\prod_{j=1}^{r} \left(B_{1+\sum_{l=j}^r\nu_{jl}}\,\gamma_j^{\sum_{l=j}^r \nu_{jl}}\frac{1}{\prod_{d=1}^{j}\nu_{dj}!}\right). \label{Cont-2-02}
  \end{align}
\end{theorem}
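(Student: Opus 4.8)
The plan is to derive \eqref{cont-gene} first and then read off \eqref{Cont-2-02} by a combinatorial expansion. For the generating-function identity, I would start from Theorem \ref{T-c-1-zeta}, which tells us that
\[
\zeta^{\rm des}_r((s_j);(\gamma_j))
=\prod_{k=1}^r \frac{1}{(e^{2\pi i s_k}-1)\Gamma(s_k)}\int_{\mathcal{C}^{r}}
\prod_{j=1}^{r} g_j\bigl(\gamma_j \textstyle\sum_{k=j}^r t_k\bigr)\prod_{k=1}^r t_k^{s_k-1}dt_k,
\]
where $g_j(y)=\lim_{c\to 1}\frac{-1}{c-1}\bigl(\frac{1}{e^{y}-1}-\frac{c}{e^{cy}-1}\bigr)$. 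The first step is to compute this limit in closed form: differentiating $\frac{c}{e^{cy}-1}$ with respect to $c$ at $c=1$ gives
\[
g(y)=-\frac{d}{dc}\Bigl(\frac{c}{e^{cy}-1}\Bigr)\Big|_{c=1}
=\frac{(1-y)e^{y}-1}{(e^{y}-1)^{2}},
\]
which is exactly the $j$-th factor appearing on the left-hand side of \eqref{cont-gene}. This identifies the integrand of \eqref{Cont-2} as $\prod_{j=1}^r g\bigl(\gamma_j\sum_{k=j}^r t_k\bigr)$, which is holomorphic near the origin by the remark after Definition \ref{define-tilde-H} (or directly, since $g(y)=\sum_{m\ge 0} c_m y^m$ has no singular part — note $g(0)$ is finite).

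The second step is to evaluate the Hankel-contour integral at $(s_j)=(-m_j)$ with $m_j\in\mathbb{N}_0$. Exactly as in the proof of Theorem \ref{T-multiple}, using
\[
\lim_{s\to -n}\frac{1}{(e^{2\pi is}-1)\Gamma(s)}=\frac{(-1)^n n!}{2\pi i}\quad(n\in\mathbb{N}_0)
\]
together with the Hankel-integral formula $\frac{1}{2\pi i}\int_{\mathcal{C}} h(t)t^{-n-1}dt=\frac{h^{(n)}(0)}{n!}$ for a function $h$ holomorphic at the origin, one gets that $(-1)^{m_1+\cdots+m_r}\zeta^{\rm des}_r((-m_j);(\gamma_j))$ equals the coefficient of $\prod_j t_j^{m_j}/m_j!$ in the Maclaurin expansion of $\prod_{j=1}^r g\bigl(\gamma_j\sum_{k=j}^r t_k\bigr)$. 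Assembling over all $(m_j)$ yields \eqref{cont-gene}. The only mild subtlety here is that the multivariable integral factors through the one-variable Hankel integrals one at a time, which is legitimate since the integrand is a product of functions each holomorphic at $t_k=0$; this is the standard manipulation already used twice above.

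The third step is the explicit expansion \eqref{Cont-2-02}. From \eqref{def-tilde-H} we already have the one-variable series $g(y)=\sum_{m\ge 1}(1-c^m)B_m y^{m-1}/m!\big|_{\text{as }c\to 1}$; more usefully, applying $\lim_{c\to1}\frac{-1}{c-1}$ to $\sum_{m\ge1}(1-c^m)B_m\frac{y^{m-1}}{m!}$ termwise gives $g(y)=\sum_{m\ge1} m B_m\frac{y^{m-1}}{m!}=\sum_{\nu\ge0}B_{\nu+1}\frac{y^\nu}{\nu!}$. Hence the $j$-th factor of the left side of \eqref{cont-gene} is $\sum_{\nu\ge0}B_{\nu+1}\gamma_j^{\nu}\bigl(\sum_{k=j}^r t_k\bigr)^{\nu}/\nu!$. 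Now expand $\bigl(\sum_{k=j}^r t_k\bigr)^{\nu}$ by the multinomial theorem, writing $\nu=\sum_{l=j}^r \nu_{jl}$ with the part $\nu_{jl}$ going to the variable $t_l$; multiplying the $r$ factors together, the total exponent of $t_j$ becomes $\sum_{d=1}^{j}\nu_{dj}$ (contributions from all factors $j'\le j$), which must equal $k_j$. Collecting the coefficient of $\prod_j t_j^{k_j}/k_j!$ and comparing with \eqref{cont-gene}, while keeping track of the factorials $\prod_j k_j!/\prod_{d\le j}\nu_{dj}!$ from converting between $(\sum_k t_k)^\nu/\nu!$ and $\prod t_l^{\nu_{jl}}/\nu_{jl}!$, produces \eqref{Cont-2-02} after multiplying by $\prod_l(-1)^{k_l}$.

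I expect the main obstacle to be purely bookkeeping: organizing the triangular array $\{\nu_{dj}\}_{1\le d\le j\le r}$ and verifying that the factorial normalizations match so that the clean form $\prod_{j}\bigl(B_{1+\sum_{l=j}^r\nu_{jl}}\gamma_j^{\sum_{l=j}^r\nu_{jl}}/\prod_{d=1}^{j}\nu_{dj}!\bigr)$ with the single overall factor $\prod_l(-1)^{k_l}k_l!$ comes out correctly — in particular that the $j$-th Bernoulli index is $1+\sum_{l=j}^r\nu_{jl}$ (the number of $t$-variables the $j$-th factor distributes into, shifted by one) while the $j$-th variable collects $\sum_{d=1}^j\nu_{dj}$ of them. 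There is no analytic difficulty beyond what is already established; the genuine content — that \eqref{Cont-2-1} is entire and equals the contour integral of the pointwise limit — is exactly Theorem \ref{T-c-1-zeta}, on which everything here rests.
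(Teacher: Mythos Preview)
Your proposal is correct and follows essentially the same route as the paper: identify the $c\to1$ limit of each factor as $g(y)=\dfrac{(1-y)e^y-1}{(e^y-1)^2}$, invoke Theorem~\ref{T-c-1-zeta} together with the Hankel-contour/residue evaluation at $(s_j)=(-m_j)$ to obtain \eqref{cont-gene}, and then expand via $g(y)=\sum_{\nu\ge 0}B_{\nu+1}y^\nu/\nu!$ and the multinomial theorem to read off \eqref{Cont-2-02}. One small slip: your limit equals $+\dfrac{d}{dc}\bigl(\tfrac{c}{e^{cy}-1}\bigr)\big|_{c=1}$, not $-\dfrac{d}{dc}$, though your final closed form for $g(y)$ is correct.
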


\begin{proof}
By \eqref{def-tilde-H}, we have
\begin{align*}
& \lim_{c\to 1}\frac{(-1)^r}{(c-1)^r}\widetilde{\mathfrak{H}}_r  (( t_j )_{j=1}^{r}; ( \gamma_j)_{j=1}^{r};c)\\
&\quad =\lim_{c \to 1}\prod_{j=1}^{r}\frac{(-1)}{c-1}\left( \frac{1}{\exp\left(\gamma_j \sum_{k=j}^r t_k\right)-1}-\frac{c}{\exp\left(c\gamma_j \sum_{k=j}^r t_k\right)-1}\right)\\
& \quad =\prod_{j=1}^{r}\frac{\left(1-\gamma_j \sum_{k=j}^r t_k\right)\exp\left(\gamma_j \sum_{k=j}^r t_k\right)-1}{\left( \exp\left(\gamma_j \sum_{k=j}^r t_k\right)-1\right)^2}.
\end{align*}
Hence we obtain \eqref{cont-gene} from \eqref{Cont-2}. 
Also, by \eqref{def-tilde-H}, we have 
\begin{align*}
& \lim_{c\to 1}\frac{(-1)^r}{(c-1)^r}\widetilde{\mathfrak{H}}_r  (( t_j )_{j=1}^{r}; ( \gamma_j)_{j=1}^{r};c)\\
& =\lim_{c\to 1}\prod_{j=1}^{r} \left(\sum_{m_j=1}^\infty  \frac{c^{m_j}-1}{c-1}B_{m_j}\frac{\left(\gamma_j \sum_{l=j}^r t_l\right)^{m_j-1}}{m_j!}\right)\\
& =\prod_{j=1}^{r} \left(\sum_{m_j=1}^\infty  B_{m_j}\frac{\left(\gamma_j \sum_{l=j}^r t_l\right)^{m_j-1}}{(m_j-1)!}\right)\\
& =\prod_{j=1}^{r} \left(\sum_{n_j=0}^\infty  B_{n_j+1}\gamma_j^{n_j}\sum_{\nu_{jj},\ldots,\nu_{jr}\geqslant 0 \atop \sum_{l=j}^r \nu_{jl}=n_j}\frac{t_j^{\nu_{jj}}}{\nu_{jj}!}\cdots \frac{t_r^{\nu_{jr}}}{\nu_{jr}!}\right)\\
& =\sum_{\nu_{11}\geqslant 0 \atop {\nu_{12},\,\nu_{22}\geqslant 0 \atop {\,\cdots \atop \nu_{1r},\,\nu_{2r},\ldots,\,\nu_{rr}\geqslant 0}}}\prod_{j=1}^{r} \left(B_{1+\sum_{l=j}^r\nu_{jl}}\,\gamma_j^{\sum_{l=j}^r \nu_{jl}}\frac{t_j^{\sum_{d=1}^{j}\nu_{dj}}}{\prod_{d=1}^{j}\nu_{dj}!}\right).
\end{align*}
Hence, substituting the above relation into \eqref{Cont-2} and using the residue theorem with 
$$\lim_{s\to -k}\left(e^{2\pi is}-1\right)\Gamma(s)=\frac{(2\pi i)(-1)^k}{k!}\quad (k\in \mathbb{N}_0),$$
we have
\begin{align*}
    \zeta^{\rm des}_r((-k_j);(\gamma_j))& =\prod_{l=1}^r \frac{(-1)^{k_l}k_l!}{2\pi i}\notag\\
    &\times {(2\pi i)^r}
\sum_{\nu_{11}\geqslant 0 \atop {\nu_{12},\,\nu_{22}\geqslant 0 \atop {\,\cdots \atop {\nu_{1r},\,,\ldots,\,\nu_{rr}\geqslant 0 \atop {\sum_{d=1}^j \nu_{dj}=k_j \atop (1\leqslant j\leqslant r)}}}}}\prod_{j=1}^{r} \left(B_{1+\sum_{l=j}^r\nu_{jl}}\,\gamma_j^{\sum_{l=j}^r \nu_{jl}}\frac{1}{\prod_{d=1}^{j}\nu_{dj}!}\right). 
  \end{align*}
Thus we obtain the assertion.
\end{proof}




Now we give an expression of $\zeta^{\rm des}_r((s_j);(\gamma_j))$ in terms of $\zeta_r((s_j);(1);(\gamma_j))$, which can be regarded as a multiple version of $\zeta^{\rm des}_1(s;1)=(1-s)\zeta(s)$ in the case $r=1$ (see Examples \ref{Exam-SH} and \ref{Ex-ex-1}). 

For $s_j\in \mathbb{C}$ with $\Re s_j>1$ $(1\leqslant j\leqslant r)$ and $\gamma_1,\ldots,\gamma_r\in \mathbb{C}$ with $\Re \gamma_j>0$ $(1\leqslant j \leqslant r)$, we set
\begin{multline}
  I_{c,r}(s_1,\ldots,s_r;\gamma_1,\ldots,\gamma_r):=\frac{1}{\prod_{j=1}^r\Gamma(s_j)}
  \int_{[0,\infty)^r}
  \prod_{j=1}^r dt_j
  \prod_{j=1}^rt_j^{s_j-1}
  \\
  \times\prod_{j=1}^r\Biggl(\frac{1}{\exp\Bigl(\gamma_j\sum_{k=j}^r t_k\Bigr)-1}
  -
  \frac{c}{\exp\Bigl(c\gamma_j\sum_{k=j}^r t_k\Bigr)-1}
  \Biggr). \label{ex-01}
\end{multline}
From Definition \ref{def-MZF-2}, we see that 
$$\zeta^{\rm des}_r((s_j);(\gamma_j))=\lim_{c\to 1}\frac{(-1)^r}{(c-1)^r}I_{c,r}((s_j);(\gamma_j)).$$
For indeterminates $u_j,v_j$ $(1\leqslant j \leqslant r)$, we set 
\begin{equation}
\mathcal{G}((u_j),(v_j)):=\prod_{j=1}^r\Bigl( 1-(u_jv_j+\cdots+u_rv_r)(v_j^{-1}-v_{j-1}^{-1})\Bigr)  \label{def-GG}
\end{equation}
with the
convention $v_0^{-1}=0$, and also define the set of integers $\{ a_{\mb,\nb}\}$ by 
\begin{equation}
  \mathcal{G}((u_j),(v_j))=\sum_{\mb=(l_j) \in \mathbb{N}_0^r \atop {\nb=(m_j) \in \mathbb{Z}^r \atop \sum_{j=1}^r m_j=0}} a_{\mb,\nb}\prod_{j=1}^r u_j^{l_j}v_j^{m_j}, \label{ex-02}
\end{equation}
where the sum on the right-hand side is obviously a finite sum. 
Note that the condition $\sum_{j=1}^r m_j=0$ for the summation indices $\nb=(m_j)$ can be deduced from the fact that the right-hand side of \eqref{def-GG} is a homogeneous polynomial of degree $0$ in $(v_j)$, namely so is that of \eqref{ex-02}. 

\begin{theorem}\label{Th-ex} 
For $\gamma_1,\ldots,\gamma_r\in \mathbb{C}$ with $\Re \gamma_j>0$ $(1\leqslant j \leqslant r)$,
\begin{align}
  \zeta^{\rm des}_r((s_j);(\gamma_j))
& =\sum_{\mb=(l_j) \in \mathbb{N}_0^r \atop {\nb=(m_j) \in \mathbb{Z}^r \atop \sum_{j=1}^r m_j=0}} a_{\mb,\nb}\Bigl(\prod_{j=1}^{r}(s_j)_{l_j}\Bigr)\zeta_r(s_1+m_1,\ldots,s_r+m_r;(1);(\gamma_j))\label{ex-03}
\end{align}
holds for all $(s_j)\in \mathbb{C}^r$, 
where $(s)_0=1$ and $(s)_k=s(s+1)\cdots(s+k-1)$ $(k\in \mathbb{N})$ are the Pochhammer symbols.
\end{theorem}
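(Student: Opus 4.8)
The plan is to start from the integral representation of $\zeta^{\rm des}_r$ in Theorem \ref{T-c-1-zeta} and massage the integrand into a finite linear combination of the integrands appearing in the (Hankel-contour version of the) formula \eqref{Cont} for $\zeta_r((s_j);(1);(\gamma_j))$ with shifted arguments. The key observation is that the limiting integrand
\[
\prod_{j=1}^{r}\frac{\bigl(1-\gamma_j\sum_{k=j}^r t_k\bigr)\exp\bigl(\gamma_j\sum_{k=j}^r t_k\bigr)-1}{\bigl(\exp\bigl(\gamma_j\sum_{k=j}^r t_k\bigr)-1\bigr)^2},
\]
computed in the proof of Theorem \ref{C-Zr}, can be rewritten entirely in terms of the quantities $\dfrac{1}{\exp(\gamma_j\sum_{k=j}^r t_k)-1}$ and their derivatives. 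Indeed, writing $y_j=\gamma_j\sum_{k=j}^r t_k$ and $g(y)=1/(e^y-1)$, one has $g'(y)=-e^y/(e^y-1)^2=-g(y)-g(y)^2$, so the $j$-th factor above equals $g(y_j)-y_j\,\bigl(g(y_j)+g(y_j)^2\bigr)=g(y_j)+y_j\,g'(y_j)$. Thus the integrand is $\prod_{j=1}^r\bigl(g(y_j)+y_j g'(y_j)\bigr)$, a finite polynomial in the $g(y_j)$ and first derivatives.

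Next I would translate the derivatives $g'(y_j)$ and the prefactors $y_j$ into shifts of the variables $s_k$. Since $y_j=\gamma_j\sum_{k=j}^r t_k$ and the integrand is paired against $\prod_k t_k^{s_k-1}$, differentiating $g(y_j)$ in each $t_k$ (for $k\ge j$) pulls down a factor $\gamma_j$ and, after integration by parts along the Hankel contour, converts $t_k^{s_k-1}$ into a multiple of $t_k^{s_k}$, i.e.\ it produces the argument shift $s_k\mapsto s_k+1$ accompanied by a Pochhammer-type factor coming from $\Gamma(s_k+1)/\Gamma(s_k)=s_k$. This is precisely the mechanism encoded in the generating polynomial $\mathcal{G}((u_j),(v_j))$ in \eqref{def-GG}: the variable $u_j$ records "multiply by $y_j$" (hence a shift contributing to several $s_k$'s), and the combination $v_j^{-1}-v_{j-1}^{-1}$ bookkeeps the telescoping $\sum_{k\ge j}t_k-\sum_{k\ge j-1}t_k=-t_{j-1}$ structure so that each monomial $\prod_j u_j^{l_j}v_j^{m_j}$ (with $\sum m_j=0$) corresponds to a single shifted multiple zeta-function $\zeta_r(s_1+m_1,\ldots,s_r+m_r;(1);(\gamma_j))$ with coefficient $a_{\mb,\nb}\prod_j (s_j)_{l_j}$. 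I would verify this correspondence by expanding $\prod_j\bigl(g(y_j)+y_j g'(y_j)\bigr)$, expressing each $y_j g'(y_j)$ via the identity above and re-summing the resulting powers of $y_j$ back into arguments of $g$ at shifted positions — this is the combinatorial heart of matching the ad hoc-looking definition \eqref{def-GG} to the integrand.

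After the identity is established on the region $\Re s_j>1$ (where all the series and integrals converge absolutely and integration by parts on the contour is unproblematic), the extension to all of $\mathbb{C}^r$ is immediate: the left-hand side is entire by Theorem \ref{T-c-1-zeta}, and the right-hand side is a finite linear combination of functions $\zeta_r(s_1+m_1,\ldots,s_r+m_r;(1);(\gamma_j))$ each of which is meromorphic on $\mathbb{C}^r$ by \cite[Theorem 1]{MaJNT}, multiplied by the polynomial (entire) factors $\prod_j(s_j)_{l_j}$; since two meromorphic functions on $\mathbb{C}^r$ agreeing on a nonempty open set agree everywhere, \eqref{ex-03} holds identically. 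The main obstacle I anticipate is the bookkeeping in the second step: correctly tracking, after repeated differentiation and integration by parts, which shift $s_k\mapsto s_k+1$ each derivative induces and how the $\gamma_j$'s and Pochhammer factors accumulate, so that the algebra collapses exactly into the generating function $\mathcal{G}$. Once that combinatorial identification is pinned down, the analytic continuation argument is routine.
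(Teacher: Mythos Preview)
Your overall architecture matches the paper's: start from the Hankel-contour representation (Theorem \ref{T-c-1-zeta}), rewrite each factor as $E(y_j)=g(y_j)-y_j\bigl(g(y_j)+g(y_j)^2\bigr)$, produce a finite linear combination of shifted zetas, and then invoke meromorphic continuation. Your final paragraph is exactly the paper's closing argument.

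The divergence is in the mechanism for the middle step. You propose to handle $y_jg'(y_j)$ via \emph{integration by parts} in the $t_k$-variables, but notice that for each $t_k$ every factor $E(y_i)$ with $i\leqslant k$ depends on $t_k$, so moving $\partial_{t_k}$ off $g(y_j)$ dumps derivatives onto all the other factors and you do not land back in expressions of the form $\prod_j g(y_j)$ with shifted exponents. This is exactly the bookkeeping obstacle you flag, and it is a genuine one, not just notational.

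The paper sidesteps it entirely. Instead of integration by parts it uses the two series
\[
\frac{1}{e^y-1}=\sum_{n\geqslant1}e^{-ny},\qquad
\frac{e^y}{(e^y-1)^2}=\sum_{n\geqslant1}n\,e^{-ny},
\]
so that the product over $j$ becomes a multiple sum over $(n_1,\ldots,n_r)$ with an extra factor $\prod_{j\in J}n_j\gamma_j$ for each choice of subset $J$, together with the polynomial factor $\prod_{j\in J}\sum_{k\geqslant j}t_k$. The $t$-integrals are then just Gamma integrals, producing $\prod_j\Gamma(s_j+l_j)$ and the Pochhammer factors. The crucial telescoping is not in the $t$-variables (as you wrote) but in the summation variables: one writes
\[
n_j\gamma_j=\Bigl(\sum_{k=1}^{j}n_k\gamma_k\Bigr)-\Bigl(\sum_{k=1}^{j-1}n_k\gamma_k\Bigr),
\]
and these cumulative sums are precisely the bases of the powers in the Dirichlet series, so the extra factors translate directly into shifts $s_j\mapsto s_j-1$ or $s_{j-1}\mapsto s_{j-1}-1$. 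Reassembling everything and comparing with the expansion of $\mathcal{G}$ is then a short computation (the identity $H=\mathcal{G}$ at the end of the paper's proof). So: keep your skeleton, but replace the integration-by-parts step with the series expansion and the telescoping in the $n$-variables.
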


We emphasize here that
each term of the right-hand side of \eqref{ex-03} is meromorphic
with infinitely many singularities but
taking the above {\it finite} sum of the shifted functions causes \lq miraculous' cancellations 
of all the {\it infinitely} many singularities
to conclude an entire function.

\begin{remark}
In \eqref{ex-03}, the condition $\sum_{j=1}^{r}m_j=0$ implies that all zeta-functions appearing on the both sides have the same \textit{weight} $s_1+\cdots+s_r$. 
\end{remark}


\begin{proof}[Proof of Theorem \ref{Th-ex}]
First we assume that $\Re s_j$ is sufficiently large for $1\leqslant j\leqslant r$. 
From \eqref{Cont} with $(\xi_j)=(1)$, we have
\begin{align}
& \zeta_r((s_j);(1);(\gamma_j))
=\frac{1}{\prod_{j=1}^r\Gamma(s_j)}
\int_{[0,\infty)^r}\prod_{j=1}^r\frac{t_j^{s_j-1}}{\exp\Bigl(\gamma_j
\sum_{k=j}^r t_k\Bigr)-1}\prod_{j=1}^r dt_j. \label{ex-04}
\end{align}
Using the relation
\begin{align*}
&\lim_{c\to1}
\frac{(-1)}{c-1}\Bigl(  \frac{1}{e^y-1}-\frac{c}{e^{cy}-1}\Bigr)\\
& \ =\frac{-1+e^y-ye^y}{(e^y-1)^2}=\frac{1}{e^y-1}-\frac{ye^y}{(e^y-1)^2}\ =E(y)\ (say),
\end{align*}
we have
\begin{multline}
  \zeta^{\rm des}_r((s_j);(\gamma_j))=\lim_{c\to1}\frac{(-1)^r}{(c-1)^r}{I_{c,r}((s_j);(\gamma_j))}
  \\
  \begin{aligned}
    &=
    \lim_{c\to1}\frac{1}{\prod_{j=1}^r\Gamma(s_j)}
    \int_{[0,\infty)^r}
    \prod_{j=1}^r dt_j
    \prod_{j=1}^r t_j^{s_j-1}
    \\
    &
    \qquad\times\prod_{j=1}^r\frac{(-1)}{c-1}\Biggl(\frac{1}{\exp\Bigl(\gamma_j\sum_{k=j}^r t_k\Bigr)-1}
    -
    \frac{c}{\exp\Bigl(c\gamma_j \sum_{k=j}^r t_k\Bigr)-1}
    \Biggr)
    \\
    &=
    \frac{1}{\prod_{j=1}^r\Gamma(s_j)}
    \int_{[0,\infty)^r}
    \prod_{j=1}^r dt_j
    \prod_{j=1}^r t_j^{s_j-1}
\prod_{j=1}^r
E\Bigl(\gamma_j\sum_{k=j}^r t_k\Bigr).
  \end{aligned}
\label{ex-04-02}
\end{multline}
We calculate the last product of \eqref{ex-04-02}. Using the relations 
\begin{align*}
  \frac{1}{e^y-1}&=\sum_{n=1}^\infty e^{-ny}, \quad \frac{e^y}{(e^y-1)^2}=\sum_{n=1}^\infty ne^{-ny},
\end{align*}
we have, for $J\subset\{1,\ldots,r\}$, 
\begin{align}
&  \int_{[0,\infty)^r}
  \prod_{j=1}^r dt_j
  \prod_{j=1}^r t_j^{s_j-1}
  \prod_{j\notin J}\frac{1}{\exp\Bigl(\gamma_j\sum_{k=j}^r t_k\Bigr)-1}
  \prod_{j\in J}\frac{\Bigl(\gamma_j\sum_{k=j}^r t_k\Bigr)\exp\Bigl(\gamma_j\sum_{k=j}^r t_k\Bigr)}{\Bigl(\exp\Bigl(\gamma_j\sum_{k=j}^r t_k\Bigr)-1\Bigr)^2}
\notag\\
&=
  \int_{[0,\infty)^r}
  \prod_{j=1}^r dt_j
  \prod_{j=1}^r t_j^{s_j-1}
  \prod_{j\notin J}\sum_{n_j=1}^\infty\exp\Bigl(-n_j\gamma_j\sum_{k=j}^r t_k\Bigr)\notag\\
& \qquad \times 
  \prod_{j\in J}\sum_{n_j=1}^\infty n_j\exp\Bigl(-n_j\gamma_j\sum_{k=j}^r t_k\Bigr)
  \prod_{j\in J}\Bigl(\gamma_j\sum_{k=j}^r t_k\Bigr)
\notag\\
&=
\sum_{\substack{n_1,\ldots,n_r\geqslant1}}
  \Bigl(\prod_{j\in J}n_j\gamma_j\Bigr)
  \int_{[0,\infty)^r}
  \prod_{j=1}^r dt_j
  \prod_{j=1}^r t_j^{s_j-1}
  \prod_{j=1}^r
  \exp\Bigl(-t_j\sum_{k=1}^j n_k\gamma_k\Bigr)
  \prod_{j\in J}\Bigl(\sum_{k=j}^r t_k\Bigr)
\notag\\
&=
\sum_{\substack{n_1,\ldots,n_r\geqslant 1}}
  \Bigl(\prod_{j\in J}\Bigl(\sum_{k=1}^j n_k\gamma_k-\sum_{k=1}^{j-1} n_k\gamma_{k}\Bigr)\Bigr)\notag\\
& \qquad \times  \int_{[0,\infty)^r}
  \prod_{j=1}^r dt_j
  \prod_{j=1}^r t_j^{s_j-1}
  \prod_{j=1}^r
  \exp\Bigl(-t_j\sum_{k=1}^j n_k\gamma_{k}\Bigr)
  \prod_{j\in J}\Bigl(\sum_{k=j}^r t_k\Bigr)
\notag\\
&=
\sum_{\mb \in \mathbb{N}_0^r}b_{J,\mb}
\sum_{\substack{n_1,\ldots,n_r\geqslant 1}}
  \Bigl(\prod_{j\in J}\Bigl(\sum_{k=1}^j n_k\gamma_k-\sum_{k=1}^{j-1} n_k\gamma_k\Bigr)\Bigr)\notag\\
&\qquad \times  \int_{[0,\infty)^r}
  \prod_{j=1}^r dt_j
  \prod_{j=1}^r t_j^{s_j+l_j-1}
  \prod_{j=1}^r
  \exp\Bigl(-t_j\sum_{k=1}^j n_k\gamma_k\Bigr)
\notag\\
&=
\sum_{\mb\in \mathbb{N}_0^r}b_{J,\mb}
\sum_{\substack{n_1,\ldots,n_r\geqslant 1}}
  \Bigl(\prod_{j\in J}\Bigl(\sum_{k=1}^j n_k\gamma_k-\sum_{k=1}^{j-1} n_k\gamma_k\Bigr)\Bigr)
  \prod_{j=1}^r\Gamma(s_j+l_j)\frac{1}{\Bigl(\sum_{k=1}^j n_k\gamma_k\Bigr)^{s_j+l_j}}
\notag\\
&=
\sum_{\mb\in \mathbb{N}_0^r}b_{J,\mb}
\prod_{j=1}^r\Gamma(s_j+l_j)
\sum_{\substack{n_1,\ldots,n_r\geqslant 1}}
\sum_{K\subset J\setminus\{1\}}(-1)^{|K|}
  \prod_{j=1}^r\frac{1}{\Bigl(\sum_{k=1}^j n_k\gamma_k\Bigr)^{s_j+l_j-\delta_{j\in J\setminus K}-\delta_{j+1\in K}}}
\notag\\
&=
\sum_{\mb\in \mathbb{N}_0^r}b_{J,\mb}
\sum_{K\subset J\setminus\{1\}}(-1)^{|K|}
\Bigl(\prod_{j=1}^r\Gamma(s_j+l_j)\Bigr)
\zeta_r((s_j+l_j-\delta_{j\in J\setminus K}-\delta_{j+1\in K});(1);(\gamma_j)),
\label{ex-05-2}
\end{align}
where $|K|$ implies the number of elements of $K$, 
\begin{equation*}
\delta_{i\in I}=
\begin{cases}
1 & (i\in I)\\
0 & (i\not\in I)
\end{cases}
\end{equation*}
for $I\subset J$, and 
\begin{equation}
  \label{ex-06}
  \prod_{j\in J}\Bigl(\sum_{k=j}^r t_k\Bigr)=\sum_{\mb \in \mathbb{N}_0^r} b_{J,\mb} \prod_{j=1}^rt_j^{l_j}.
\end{equation}
Hence, by \eqref{ex-04-02} we have
\begin{multline}
  \zeta^{\rm des}_r((s_j);(\gamma_j))
  \\
  \begin{aligned}
    &=
    \sum_{J\subset\{1,\ldots,r\}}(-1)^{|J|}
    \sum_{\mb\in \mathbb{N}_0^r}b_{J,\mb}
    \sum_{K\subset J\setminus\{1\}}(-1)^{|K|}
    \Bigl(\prod_{j=1}^r\frac{\Gamma(s_j+l_j)}{\Gamma(s_j)}
    \Bigr)
    \zeta_r((s_j+l_j-\delta_{j\in J\setminus K}-\delta_{j+1\in K});(1);(\gamma_j))
    \\
    &=
    \sum_{J\subset\{1,\ldots,r\}}
    \sum_{K\subset J\setminus\{1\}}(-1)^{|J\setminus K|}
    \sum_{\mb\in \mathbb{N}_0^r}b_{J,\mb}
    \Bigl(\prod_{j=1}^r(s_j)_{l_j}
    \Bigr)
    \zeta_r((s_j+l_j-\delta_{j\in J\setminus K}-\delta_{j+1\in K});(1);(\gamma_j)).
  \end{aligned}
\label{ex-07}
\end{multline}
Finally we set
\begin{equation*}
H((u_j),(v_j)):=
  \sum_{J\subset\{1,\ldots,r\}}
  \sum_{K\subset J\setminus\{1\}}(-1)^{|J\setminus K|}
  \sum_{\mb \in \mathbb{N}_0^r}b_{J,\mb}
  \prod_{j=1}^r
  u_j^{l_j}v_j^{l_j-\delta_{j\in J\setminus K}-\delta_{j+1\in K}}
\end{equation*}
and aim to prove that 
\begin{equation}
\mathcal{G}((u_j),(v_j))=H((u_j),(v_j)). \label{ex-claim}
\end{equation}

It follows from \eqref{ex-06} that
\begin{equation*}
  \begin{split}
    H((u_j),(v_j))
    &=
    \sum_{J\subset\{1,\ldots,r\}}
    \sum_{K\subset J\setminus\{1\}}(-1)^{|J\setminus K|}
    \Bigl(\prod_{j\in J}\sum_{k=j}^r u_kv_k\Bigr)
    \prod_{j=1}^rv_j^{-\delta_{j\in J\setminus K}-\delta_{j+1\in K}}
    \\
    &=
    \sum_{J\subset\{1,\ldots,r\}}
    \Bigl(\prod_{j\in J}\sum_{k=j}^r u_kv_k\Bigr)
    \sum_{K\subset J\setminus\{1\}}
    \prod_{j\in J\setminus K}(-v_j^{-1})
    \prod_{j\in K}v_{j-1}^{-1}.
  \end{split}
\end{equation*}
Since $v_0^{-1}=0$, we have
\begin{equation*}
      \sum_{K\subset J\setminus\{1\}}
    \prod_{j\in J\setminus K}(-v_j^{-1})
    \prod_{j\in K}v_{j-1}^{-1}=\prod_{j\in J}(-v_j^{-1}+v_{j-1}^{-1}).
\end{equation*}
Hence we obtain
\begin{equation}
  \begin{split}
    H((u_j),(v_j))
    &=
    \sum_{J\subset\{1,\ldots,r\}}
    \prod_{j\in J}\Bigl(\sum_{k=j}^r u_kv_k\Bigr)
    (-v_j^{-1}+v_{j-1}^{-1})
    \\
    &=
    \prod_{j=1}^r\Bigl(\Bigl(\sum_{k=j}^r u_kv_k\Bigr)
    (-v_j^{-1}+v_{j-1}^{-1})+1\Bigr)
    \\
    &=
    \prod_{j=1}^r\Bigl(1-\Bigl(\sum_{k=j}^r u_kv_k\Bigr)
    (v_j^{-1}-v_{j-1}^{-1})\Bigr)=\mathcal{G}((u_j),(v_j)).
  \end{split}
\label{ex-08}
\end{equation}
Combining \eqref{ex-02}, \eqref{ex-07} and \eqref{ex-08}, and regarding 
$(s_j)_{l_j}$ and $\zeta_r((s_j+l_j);(1);(\gamma_j))$ as indeterminates $u_j^{l_j}$ and $v_j^{l_j}$, 
we see that \eqref{ex-03} holds when 
$\Re s_j$ is sufficiently large for $1\leqslant j\leqslant r$. 
It is known that each function on the right-hand side can be continued meromorphically to $\mathbb{C}^r$ (see \cite[Theorem 1]{MaJNT}). Since $\zeta^{\rm des}_r((s_j);(\gamma_j))$ is entire, we see that \eqref{ex-03} holds for all $(s_j)\in \mathbb{C}^r$. 
Thus we complete the proof of Theorem \ref{Th-ex}.
\end{proof}

\section{Examples}\label{sec-examples}

\begin{example}\label{Ex-ex-1}
In the case $r=1$ and $\gamma_1=1$, we have
  \begin{equation*}
    \mathcal{G}(u_1,v_1)=1- u_1v_1v_1^{-1}=1-u_1,
  \end{equation*}
namely, $a_{0,0}(1)=1$ and $a_{1,0}(1)=-1$. Hence we have
\begin{equation*}
  \zeta^{\rm des}_1(s;1)=\lim_{c\to1}\frac{(-1)}{c-1}{I_{c,1}(s)}=(s)_0\zeta_1(s;1;1)-(s)_1\zeta_1(s;1;1)=(1-s)\zeta(s),
\end{equation*}
which coincides with \eqref{def-Z1}. We see that 
$$\zeta^{\rm des}_1(1;1)=-1$$ 
and 
$$\zeta^{\rm des}_1(-k;1)=(-1)^{k}B_{k+1} \qquad (k\in \mathbb{N}_0).$$ 
\end{example}

\begin{example}\label{Z-EZ-double}
In the case $r=2$, we can easily check that 
  \begin{equation*}
    \begin{split}
      \mathcal{G}((u_j),(v_j))
      &=(1-(u_1v_1+u_2v_2)v_1^{-1})(1-u_2v_2(v_2^{-1}-v_1^{-1}))
      \\
      &=(1-u_1)(1-u_2)+(u_2^2-u_1u_2)v_1^{-1}v_2-u_2^2v_1^{-2}v_2^2.
    \end{split}
  \end{equation*}
Then \eqref{ex-03} implies that
\begin{align}
  \zeta^{\rm des}_2(s_1,s_2;\gamma_1,\gamma_2)& =(s_1-1)(s_2-1)\zeta_2(s_1,s_2;(1);\gamma_1,\gamma_2)\notag
  \\
  & \quad +s_2(s_2+1-s_1)\zeta_2(s_1-1,s_2+1;(1);\gamma_1,\gamma_2)\notag\\
  & \quad -s_2(s_2+1)\zeta_2(s_1-2,s_2+2;(1);\gamma_1,\gamma_2). \label{ex-04-2}
\end{align}
Let $k,l\in \mathbb{N}_0$. 
By \eqref{Cont-2-02}, 
we obtain
\begin{equation}
\zeta^{\rm des}_2(-k,-l;\gamma_1,\gamma_2)=(-1)^{k+l}\sum_{\nu=0}^{l}\binom{l}{\nu}B_{k+\nu+1}B_{l-\nu+1}\gamma_1^{k+\nu}\gamma_2^{l-\nu}. \label{EZ-val}
\end{equation}
\end{example}

\begin{remark}\label{EZ-double}

Setting $(\gamma_1,\gamma_2)=(1,1)$ in \eqref{ex-04-2}, we obtain
\begin{align}
  \zeta^{\rm des}_2(s_1,s_2;1,1)
  & =(s_1-1)(s_2-1)\zeta_2(s_1,s_2)\notag\\
  & \quad +s_2(s_2+1-s_1)\zeta_2(s_1-1,s_2+1)-s_2(s_2+1)\zeta_2(s_1-2,s_2+2). \label{ex-04-3}
\end{align}
%
From Theorem \ref{T-c-1-zeta}, 
we see that $\zeta^{\rm des}_2(s_1,s_2;1,1)$ on the left-hand side of \eqref{ex-04-3} 
is entire, 
though each double zeta-function (defined by \eqref{MZF-def}) on the right-hand side of \eqref{ex-04-3} has infinitely many singularities (see \eqref{EZ-sing}).
In fact, we can explicitly write $\zeta^{\rm des}_2(-m,-n;1,1)$ in terms of Bernoulli numbers by \eqref{EZ-val}, though the values of $\zeta_2(s_1,s_2)$ at non-positive integers (except for regular points) cannot be determined uniquely because they are irregular singularities (see \cite{AET}).
%
\end{remark}

\begin{example}
In the case $r=3$, we can see that
\begin{align*}
    \mathcal{G}((u_j),(v_j))
    &=(1-(u_1v_1+u_2v_2+u_3v_3)v_1^{-1})(1-(u_2v_2+u_3v_3)(v_2^{-1}-v_1^{-1}))
    \\
    &\qquad\times
    (1-u_3v_3(v_3^{-1}-v_2^{-1}))
    \\
    &=
    -(u_1-1) (u_2-1) (u_3-1)     
    +(u_1-1) (u_2 - u_3) u_3 v_2^{-1}v_3   \\
    &\qquad
    + (u_1-1) u_3^2 v_2^{-2}v_3^2    
    +(u_1 - u_2) u_2 (u_3-1) v_1^{-1}v_2 \\
    &\qquad
    + u_3 (-u_1 + 2 u_2 - u_1 u_2 + u_2^2 + u_1 u_3 - 2 u_2 u_3) v_1^{-1}v_3 \\
    &\qquad
    - u_3^2 (-1 + u_1 - 2 u_2 + u_3) v_1^{-1}v_2^{-1}v_3^2
    + u_3^3 v_1^{-1}v_2^{-2}v_3^3     \\
    &\qquad
    +u_2^2 (u_3-1)v_1^{-2} v_2^2 
    -  u_2 (2 + u_2 - 2 u_3) u_3 v_1^{-2}v_2 v_3 \\
    &\qquad
    + u_3^2 (-1 - 2 u_2 + u_3) v_1^{-2}v_3^2 
    - u_3^3 v_1^{-2}v_2^{-1}v_3^3.
\end{align*}
Therefore we obtain
\begin{align*}
    &\zeta_3^{\rm des}(s_1,s_2,s_3;\gamma_1,\gamma_2,\gamma_3)\\
    &\quad =
    -(s_1-1) (s_2-1) (s_3-1) \zeta_3(s_1,s_2,s_3;(1);\gamma_1,\gamma_2,\gamma_3)\\
    &\qquad
    + (s_1-1) (-1 + s_2 -  s_3) s_3 \zeta_3(s_1,s_2-1,s_3+1;(1);\gamma_1,\gamma_2,\gamma_3)\\
    &\qquad
    + (s_1-1) s_3 (s_3+1) \zeta_3(s_1,s_2-2,s_3+2;(1);\gamma_1,\gamma_2,\gamma_3)\\
    &\qquad
    +(-1 + s_1 - s_2) s_2 (s_3-1) \zeta_3(s_1-1,s_2+1,s_3;(1);\gamma_1,\gamma_2,\gamma_3)\\
    &\qquad
    + s_3 (s_2 - s_1 s_2 + s_2^2 + s_1 s_3 - 2 s_2 s_3) \zeta_3(s_1-1,s_2,s_3+1;(1);\gamma_1,\gamma_2,\gamma_3)\\
    &\qquad
    - s_3 (s_3+1) (1 + s_1 - 2 s_2 + s_3) \zeta_3(s_1-1,s_2-1,s_3+2;(1);\gamma_1,\gamma_2,\gamma_3)\\
    &\qquad
    + s_3 (s_3+1) (s_3+2) \zeta_3(s_1-1,s_2-2,s_3+3;(1);\gamma_1,\gamma_2,\gamma_3)\\
    &\qquad
    +s_2 (s_2+1) (s_3-1) \zeta_3(s_1-2,s_2+2,s_3;(1);\gamma_1,\gamma_2,\gamma_3)\\
    &\qquad
    -  s_2 (1 + s_2 - 2 s_3) s_3 \zeta_3(s_1-2,s_2+1,s_3+1;(1);\gamma_1,\gamma_2,\gamma_3)\\
    &\qquad    
    + s_3 (s_3+1) (1 - 2 s_2 + s_3) \zeta_3(s_1-2,s_2,s_3+2;(1);\gamma_1,\gamma_2,\gamma_3)\\
    &\qquad
    - s_3 (s_3+1) (s_3+2) \zeta_3(s_1-2,s_2-1,s_3+3;(1);\gamma_1,\gamma_2,\gamma_3).
\end{align*}
Let $k,l,m\in \mathbb{N}_0$. 
By \eqref{Cont-2-02}, we have
\begin{align*}
    \zeta^{\rm des}_3(-k,-l,-m;\gamma_1,\gamma_2,\gamma_3)
    &=(-1)^{k+l+m}\sum_{\nu=0}^m
    \sum_{\rho=0}^{m-\nu}\sum_{\kappa=0}^l\binom{l}{\kappa}\binom{m}{\nu\ \rho}
    \\
    &\quad\times
    B_{k+\nu+\kappa+1}B_{l-\kappa+\rho+1}B_{m-\nu-\rho+1}
    \gamma_1^{k+\nu+\kappa+1}\gamma_2^{l-\kappa+\rho+1}\gamma_3^{m-\nu-\rho+1},
\end{align*}
where $\binom{m}{\nu\ \rho}=\frac{m!}{\nu!\, \rho!\, (m-\nu-\rho)!}$.

\end{example}

\begin{remark}\label{rem-general-form}
Our desingularization method in this paper is for multiple zeta-functions of the generalized Euler-Zagier type \eqref{gene-EZ}.
In 
\cite{FKMT02}, we will extend our desingularization method to more general multiple series.
\end{remark}

\begin{remark}\label{AK-xi}
Arakawa and Kaneko \cite{AK} defined an entire function $\xi_k(s)$ associated with poly-Bernoulli numbers $\{B_n^{(k)}\}$ mentioned in Remark \ref{poly-Bernoulli}. It is known that, for example, 
\begin{align*}
& \xi_1(s)=s\zeta(s+1),\\
& \xi_2(s)=-\zeta_2(2,s)+s\zeta_2(1,s+1)+\zeta(2)\zeta(s).
\end{align*}
Comparing these formulas with \eqref{def-Z1} and \eqref{ex-04-2}, and using the well-known formula
$$\zeta_2(0,s)=\sum_{m,n=1}\frac{1}{(m+n)^s}=\sum_{N=2}^\infty \frac{N-1}{N^s}=\zeta(s-1)-\zeta(s),$$
we obtain
\begin{equation*}
\xi_1(s)=-\zeta^{\rm des}_1(s+1;1), 
\end{equation*}
and 
\begin{align*}
&(1-s)\xi_2(s)=\zeta^{\rm des}_2(2,s;1,1)-\zeta^{\rm des}_1(2)\zeta^{\rm des}_1(s;1)-(s+1)\zeta^{\rm des}_1(s+1;1)+s\zeta^{\rm des}_1(s+2;1), \\
&\zeta^{\rm des}_2(2,s;1,1)=(1-s)\xi_2(s)+\xi_1(1)\xi_1(s-1)-(s+1)\xi_1(s)+s\xi_1(s+1). 
\end{align*}
Note that the both sides of the above relations are entire. 
It seems quite interesting if we acquire explicit relations between $\xi_k(s)$ and $\zeta^{\rm des}_k((s_j);(1))$ for any $k\geqslant 3$.
\end{remark}

Related to the Connes-Kreimer renormalization procedure in quantum field theory,
Guo and Zhang \cite{GZ} and  Manchon and Paycha \cite{MP} 
introduced methods using certain Hopf algebras
to give  well-defined special values of the multiple zeta-functions 
at non-positive integers.

\begin{example}
According to their computation table (in loc.cit.),
Guo-Zhang's renormalized value $\zeta_2^{\rm GZ}(0,-2)$
of $\zeta_2(s_1,s_2)$ at its singularity $(s_1,s_2)=(0,-2)$ is
$$\zeta_2^{\rm GZ}(0,-2)=\frac{1}{120},$$
while Manchon-Paycha's value $\zeta_2^{\rm MP}(0,-2)$ is
$$\zeta_2^{\rm MP}(0,-2)=\frac{7}{720}.$$
On the other hand, our desingularized method gives
$$\zeta_2^{\rm des}(0,-2;1,1)=\frac{1}{18},$$
so these three methods give values different from each other.
\end{example}

\begin{question}
Are there any relationships between our desingularization method and
their renormalization methods?
\end{question}

Finally we emphasize that since our $\zeta^{\rm des}_r((s_j);(1))$ is entire,
their special values at integer points which are neither all positive nor all non-positive
are well-determined.
These values might be also worthy to study.
We conclude this paper with the announcement of explicit examples of those values.

\begin{example}\label{Exam-1-33}
We have
\begin{align*}
& \zeta_2^{\rm des}(-1,1;1,1) =\frac{1}{8},\\
& \zeta_2^{\rm des}(-1,4;1,1) =\zeta(3)-\zeta(4),\\
& \zeta_2^{\rm des}(3,-3;1,1)=\frac{3}{4}-\frac{1}{15}\zeta(3),\\
& \zeta_2^{\rm des}(4,-3;1,1)=\frac{1}{2}+\frac{1}{2}\zeta(2)-\frac{1}{10}\zeta(4).
\end{align*}
Also we can give the following examples for non-admissible indices:
\begin{align*}
\zeta_2^{\rm des}(1,1;1,1)&=\frac{1}{2},\\
\zeta_2^{\rm des}(2,1;1,1)&=-\zeta(2)+2\zeta(3),\\
\zeta_2^{\rm des}(3,1;1,1)&=2\zeta(3)-\frac{5}{4}\zeta(4).
\end{align*}
For the details, see \cite{FKMT02}.
\end{example}

\ 

\bibliographystyle{amsplain}

\

\

\ 

\begin{flushleft}
\begin{small}
H. Furusho\\
Graduate School of Mathematics \\
Nagoya University\\
Furo-cho, Chikusa-ku \\
Nagoya 464-8602, Japan\\
{furusho@math.nagoya-u.ac.jp}

\ 

Y. Komori\\
Department of Mathematics \\
Rikkyo University \\
Nishi-Ikebukuro, Toshima-ku\\
Tokyo 171-8501, Japan\\
komori@rikkyo.ac.jp

\ 

K. Matsumoto\\
Graduate School of Mathematics \\
Nagoya University\\
Furo-cho, Chikusa-ku \\
Nagoya 464-8602, Japan\\
kohjimat@math.nagoya-u.ac.jp

\ 

H. Tsumura\\
Department of Mathematics and Information Sciences\\
Tokyo Metropolitan University\\
1-1, Minami-Ohsawa, Hachioji \\
Tokyo 192-0397, Japan\\
tsumura@tmu.ac.jp

\end{small}
\end{flushleft}

\end{document}